\documentclass[11pt,a4paper]{article}

\usepackage[T1]{fontenc}
\usepackage[utf8]{inputenc}

\usepackage{amsmath}
\usepackage[margin=1.1in]{geometry}
\usepackage{lmodern}
\usepackage[english]{babel}
\usepackage{graphicx,mathrsfs}
\usepackage{cases}
\usepackage{upgreek}
\usepackage{amsfonts}
\usepackage{amssymb} 
\usepackage{amsthm}
\usepackage{nicefrac}
\usepackage{bbm}
\usepackage{bm} 
\usepackage{lipsum}
\usepackage[normalem]{ulem}
\usepackage{subcaption}
\usepackage{xcolor}
\usepackage{url}
\usepackage{csquotes}
\usepackage{stmaryrd}
\usepackage{pgf,tikz}
\usepackage{pgfplots,pgfplotstable}
\usepackage{multirow}
\usepackage{wrapfig}
\usepackage{graphicx}
\graphicspath{ {./images/} }
\usepackage{subcaption}
\usepackage[ruled,vlined]{algorithm2e}
\usepackage{xcolor}

\allowdisplaybreaks
\numberwithin{equation}{section}

\theoremstyle{plain}
\newtheorem{theorem}{Theorem}[section]
\newtheorem{lemma}[theorem]{Lemma}

\theoremstyle{definition}

\def \be {\begin{equation}}
\def \ee {\end{equation}}

\def \E {\mathbb{E}}

\def\E{{\mathbb E}}

\def\bR{{\mathbb R}}
\def\bS{{\mathbb S}}

\def\bP{{\mathbb P}}

\def\cC{{\mathcal C}}

\renewcommand{\phi}{\varphi}
\renewcommand{\epsilon}{\varepsilon}
\renewcommand{\tilde}{\widetilde}
\renewcommand{\hat}{\widehat}

 \usepackage[doi=false,isbn=false,url=false,eprint=false, style=numeric,maxbibnames=99,giveninits=true,maxcitenames=99,backend=bibtex]{biblatex}
 \addbibresource{FullBibliographyWRPH.bib}
 \AtBeginBibliography{\footnotesize}
 
 \renewrobustcmd*{\bibinitdelim}{\,} 
 \RequirePackage[colorlinks,citecolor=blue,urlcolor=blue]{hyperref}

 \title{
 	{\textbf{It\^o's Formula for the \\ Rearranged Stochastic Heat Equation}}\\
 }
 \author{François Delarue and William R.P. Hammersley \thanks{F. Delarue and W. Hammersley are supported by the European Research Council (ERC) under the European Union’s Horizon 2020 research and innovation programme (ELISA project, Grant agreement No. 101054746). Email: francois.delarue@univ-cotedazur.fr, william.hammersley@univ-cotedazur.fr} \\ Université Côte d'Azur, CNRS, Laboratoire J.A. Dieudonné}
 
 \date{\today}

\begin{document}

 	\maketitle
 
 \begin{abstract}
 	The purpose of this 
 	short note is to 
 	prove a convenient version of It\^o's formula for 
 	the Rearranged Stochastic Heat Equation (RSHE) introduced by the two authors in a previous 
 	contribution. 
 	This equation is a penalised version of the standard Stochastic Heat Equation (SHE) on the circle subject to a coloured noise, 
 	whose solution is constrained to stay within the set of symmetric quantile functions by means of a reflection term. 
 	Here, we identity the generator of the solution
 	when it is acting 
 	on functions defined on the space ${\mathcal P}_2({\mathbb R})$ (of one-dimensional probability measures 
 	with a finite second moment) 
 	that are assumed to be smooth in Lions' sense. In particular, we prove that the reflection term 
 	in the RSHE is orthogonal to the Lions (or Wasserstein) 
 	derivative of smooth functions defined on ${\mathcal P}_2({\mathbb R})$. 
 	The proof relies on non-trivial bounds for the gradient of the solution to the RSHE.
 	\medskip
 	
 	\noindent \textbf{Keywords}: {It\^o Formula; Reflected SPDE;  Rearrangement Inequalities.}
 	\vspace{2pt} 

 	\noindent \textbf{AMS Classification}: {60H15, 60G57}.
 \end{abstract}

\section{Introduction} 

\subsection{Rearranged Stochastic Heat Equation}
\label{subse:1.1}
The Rearranged Stochastic Heat Equation (RSHE) is a Stochastic Partial Differential Equation (SPDE) 
on $[0,\infty) \times {\mathbb S}$, where ${\mathbb S}={\mathbb R}/{\mathbb Z}$ is the standard circle 
of length 1, writing formally as 
\begin{equation}
\label{eq:RSHE} 
d X_t(x) = \Delta X_t(x) d t + d W_t(x) + d \eta_t(x), \quad x \in {\mathbb S}, \quad t \geq 0.  
\end{equation} 
Here $\Delta = D^2$ is the standard second-order derivative acting on 1-periodic 
functions and $(W_t)_{t \geq 0}$ is a coloured noise taking values in the space 
$L_{\rm sym}^2({\mathbb S})$ of  symmetric square-integrable  functions on ${\mathbb S}$ (w.r.t. 
the Lebesgue measure ${\rm Leb}_{\mathbb S}$ on ${\mathbb S}$) and expanding 
as
\begin{equation} 
\label{eq:expansion:W:beta:noises}
W_t(x) := \sum_{m \in {\mathbb N}_0} \lambda_m e_m(x) \beta_t^m, \quad t \geq 0,
\end{equation}  
where $((\beta^m_t)_{t \geq 0})_{m \in {\mathbb N}_0}$ is a collection of independent 
$1d$-Brownian motions constructed on a filtered probability space $(\Omega,{\mathcal F},{\mathbb F}=({\mathcal F}_t)_{t \geq 0},{\mathbb P})$
 (satisfying the usual conditions), $(e_0=1,(e_m = \sqrt{2} \cos (2 \pi m \cdot))_{m \in {\mathbb N}})$ is the standard 
symmetric Fourier basis on ${\mathbb S}$ and ${\mathbb N}_0:=\{0,1,2,\cdots\}$ is the collection of non-negative integers (including
$zero$). The collection $(\lambda_m)_{m \in {\mathbb N}_0}$ is a sequence of non-negative reals 
that is equivalent to $m^{-\lambda}$ for $m$ large, for a fixed $\lambda > 1/2$. 

The very novelty in 
\eqref{eq:RSHE} comes from the term $(\eta_t)_{t \geq 0}$ which is part of the solution itself and 
which forces the solution to stay within the space 
$U^2({\mathbb S})$
of 
so-called non-increasing 
elements of $L^2_{\rm sym}({\mathbb S})$, namely those elements with a (\textit{canonical}) representative 
that is symmetric with respect to $0$, non-increasing on $[0,1/2]$, right-continuous on 
$[0,1/2)$
and left-continuous at $1/2$. Intuitively, 
$U^2({\mathbb S})$ is a set of symmetric 
quantile functions and, when equipped with the $L^2$-norm $\| \cdot \|_2$ (on 
$L^2({\mathbb S})$), it can be  proven 
to be isometric
with ${\mathcal P}_2({\mathbb R})$ (the space 
of probability measures on ${\mathbb R}$ with a finite second moment) 
equipped with the $2$-Wasserstein distance
defined by 
$
{\mathcal W}_2(\mu,\nu) := \inf_{\pi} [ \int_{{\mathbb R}^2} \vert x-y \vert^2 d\pi(x,y) ]^{1/2}$,   for 
$\mu,\nu \in {\mathcal P}_2({\mathbb R})$, with 
the infimum being taken over the couplings $\pi$ of 
$\mu$ and $\nu$. 

Existence and uniqueness of a solution to 
\eqref{eq:RSHE} is addressed in 
\cite{delarueHammersley2022rshe}. 
Briefly, given an ${\mathcal F}_0$-measurable and $U^2({\mathbb S})$-valued initial condition $X_0$ 
satisfying ${\mathbb E}[ \| X_0 \|^p_2] < \infty$ for all $p \geq 1$, there exists a unique pair 
$(X_t,\eta_t)_{t \geq 0}$ such that 
\vskip 2pt

\noindent \textbf{RSHE.1} $(X_t)_{t \geq 0}$ is a continuous ${\mathbb F}$-adapted process with values in $
U^2({\mathbb S})$;
\vskip 1pt

\noindent  \textbf{RSHE.2} $(\eta_t)_{t \geq 0}$ is a continuous ${\mathbb F}$-adapted process with values in $H^{-2}_{\rm sym}({\mathbb S})$, starting from $0$ at $0$, such that, with probability 1, for any $u \in H^2_{\rm sym}({\mathbb S})$ that is non-increasing, the path
$(\langle \eta_t,u \rangle)_{t \geq 0}$ is non-decreasing;
\vskip 1pt

\noindent \textbf{RSHE.3}   ${\mathbb P}$-a.s., for any $u \in H^{2}_{\rm sym}({\mathbb S})$, any $t \geq 0$,  
$ \langle  X_t,u \rangle    =     \int_0^t \langle   {X}_r  ,\Delta     u \rangle dr +\langle W_t  ,u \rangle + \langle \eta_t ,u\rangle;$
 \vskip 1pt

\noindent  \textbf{RSHE.4}  for any $t \geq 0$, 
$\lim_{\varepsilon\searrow0}\mathbb{E}[ \int_0^t  e^{\varepsilon\Delta}X_r   \cdot d  \eta_r ]= 0.$
 \vskip 2pt
 
Above, 
$H^{\mu}_{\rm sym}({\mathbb S})$
(here with $\mu=-2,2$) denotes the Sobolev space of symmetric functions/distributions 
$f$ such that
$\| f \|_{2,\mu}^2 := \sum_{m \in {\mathbb N}_0} (m \vee 1)^{2 \mu}  \hat{f}_m^2 < \infty$,
 where,
for a distribution on the torus, 
$ \hat{f}_m$
is the $m^{\text{th}}$ (cosine) Fourier coefficient of $f$
(i.e., 
  $\hat{f}_m:= \int_{\bS} f(x) e_m(x) dx$ when $f$ is an integrable function  on ${\mathbb S}$).  
The corresponding inner product is denoted $\langle \cdot,\cdot \rangle_{2,\mu}$.
  
One of the main contribution of  
\cite{delarueHammersley2022rshe}
is to clarify the definition of the integral with respect to the process $(\eta_t)_{t \geq 0}$. 
For an ${\mathbb F}$-adapted process $(Z_t)_{t \geq 0}$, with continuous trajectories from $[0,\infty)$ to $L^2_{\rm sym}({\mathbb S})$,
the integral process $(\int_0^t e^{\varepsilon \Delta} Z_r \cdot d \eta_r)_{t \geq 0}$, where 
$(e^{t \Delta})_{t \geq 0}$ denotes the standard heat semi-group, can be constructed, path by path, 
as the limit of standard Riemann sums
$(\sum_{k \geq 0 : k h \leq t} e^{\varepsilon\Delta } Z_{kh} \cdot ( \eta_{(k+1)h} - \eta_{kh}))_{t \geq 0}$ as $h$ tends to $0$. 
The dot product in the Riemann sums is understood as a duality in $H^2_{\rm sym}({\mathbb S})/H^{-2}_{\rm sym}({\mathbb S})$. 
The integral is ${\mathbb F}$-adapted and continuous in time and, importantly, it retains the property \textbf{RSHE.2} and is non-decreasing when $(Z_t)_{t \geq 0}$ takes values in 
$U^2({\mathbb S})$. 
In particular, 
the latter says that the integral in 
\textbf{RSHE.4} 
is non-negative: the fact that the limit as $\varepsilon$ tends to $0$ suggests that 
 $\int_0^t  X_r   \cdot d  \eta_r =0$, which is consistent with standard orthogonality rules in reflected equations. Note that this claim is  informal 
 because $(X_r)_{r \geq 0}$ does not take values in $H^2_{\rm sym}({\mathbb S})$. A key feature is that $U^2({\mathbb S})$ is preserved by the heat semi-group.

\subsection{Main result}

The main purpose of this note is to prove a convenient form of It\^o expansion for the solution $(X_t)_{t \geq 0}$
and, as such, to identify the generator of the dynamics when tested on an appropriate class of test functions. 
Precisely, we are able to compute the action of the dynamics along smooth test functions
$\varphi : {\mathcal P}_2({\mathbb R}) \rightarrow {\mathbb R}$
(which we call `mean-field' functions). We refer to \cite{delarueOuknine2024intrinsicRegMFG} for an application. 

Differentiability is here understood in Lions' sense. 
We recall that a function $\varphi  : {\mathcal P}_2({\mathbb R}) \rightarrow {\mathbb R}$ is said to 
Lions continuously differentiable if the mapping $u \in L^2({\mathbb S}) \mapsto \varphi ({\rm Leb}_{\mathbb S} \circ u^{-1})$
is Fréchet continuously differentiable, in which case the Fréchet derivative at $u \in L^2({\mathbb S})$ can written 
$x \in {\mathbb S} \mapsto \partial_\mu \varphi({\rm Leb}_{\mathbb S} \circ u^{-1},u(x))$
for a function $y \in {\mathbb R} \mapsto \partial_\mu \varphi({\rm Leb}_{\mathbb S} \circ u^{-1},y) \in L^2({\mathbb R},{\rm Leb}_{\mathbb S} \circ u^{-1})$
depending only on the law ${\rm Leb}_{\mathbb S} \circ u^{-1}$
of $u$ (seen as a random variable on the torus). 
Below, we work with \textit{smooth} functions $\varphi$ for which 
the derivative $(\mu,y) \in {\mathcal P}_2({\mathbb R}) \times {\mathbb R} \mapsto 
\nabla_y \partial_\mu \varphi(\mu,y)$ is jointly continuous and can be further differentiated in $\mu$ and $y$. 
We refer to \cite[Chapter 5]{CarmonaDelarueI} for details.

\begin{theorem}
	\label{prop:ito:b}
For any 
	smooth mean-field function 
	$\varphi : {\mathcal P}({\mathbb R})\rightarrow \bR$
	with bounded and jointly continuous derivatives $\partial_\mu \varphi : {\mathcal P}({\mathbb R}) 
	\times {\mathbb R} \rightarrow {\mathbb R}$, $\nabla_y \partial_\mu \varphi : {\mathcal P}({\mathbb R}) 
	\times {\mathbb R} \rightarrow {\mathbb R}$
	and $\partial^2_\mu \varphi: {\mathcal P}({\mathbb R}) 
	\times {\mathbb R} \times {\mathbb R} \rightarrow {\mathbb R}$, with 
	${\mathcal P}({\mathbb S})$ being equipped with the $2$-Wasserstein distance, the following formula holds, with probability 1, for any $t \geq 0$, 
	\begin{equation}
\label{eq:main:ito}
		\begin{split}
			&\varphi(\mu_t) 
			= 
			\varphi(\mu_0)  - \int_0^t 
			\int_{\mathbb S} \nabla_y \partial_{\mu}
			\varphi(\mu_s)
			\bigl(X_s(x)\bigr) \bigl[ D X_s(x) \bigr]^2 dx \, ds
			\\
			&\hspace{15pt}  +
			\int_0^t 
			\int_{\mathbb S}  \partial_{\mu}
			\varphi(\mu_s) 
			\bigl(X_s(x)\bigr)  d W_s(x)
			 +
			\frac12
			\int_0^t  \int_{\mathbb S} \nabla_y \partial_\mu \varphi (\mu_s)\bigl( X_s(x) 
			\bigr) F_1(x) dx \, ds
			\\
			&\hspace{15pt} 
			+ \frac12  \int_0^t \int_{\mathbb S}\int_{\mathbb S} \partial^2_{\mu} \varphi (\mu_s) \bigl( X_s(x),X_s(y) 
			\bigr) F_2(x,y) dx \, dy \, ds,
		\end{split}
	\end{equation}
	where 
	$\mu_t:=\textrm{\rm Leb}_{\mathbb S} \circ X_t^{-1}$ 
	and with
	\begin{equation} 
	\label{eq:F1:F2}
		\begin{split}
			&F_1(x) := \sum_{k \in {\mathbb N}_0} \lambda_k^2 e_k^2(x), \quad 
			F_2(x,y) := \sum_{k \in {\mathbb N}_0}  \lambda_k^2 e_k(x)e_k(y), \quad 
			x,y \in {\mathbb S}. 
		\end{split}
	\end{equation}
\end{theorem}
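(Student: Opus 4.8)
The plan is to derive the formula by a regularisation-and-passage-to-the-limit scheme, working with the mollified processes $X^\veps_t := e^{\veps \Delta} X_t$ so as to gain the spatial regularity needed to apply the standard (finite-dimensional-style) Itô formula for Lions-smooth functions. First I would note that, by \textbf{RSHE.3}, each $X^\veps_t$ satisfies a genuine SPDE in $L^2_{\rm sym}({\mathbb S})$ with drift $e^{\veps\Delta}\Delta X_t$, noise $e^{\veps\Delta}dW_t$, and reflection increment $e^{\veps\Delta}d\eta_t$; since $X^\veps$ takes values in a smooth subspace, I can view $\varphi(\mu^\veps_t)$ with $\mu^\veps_t := \mathrm{Leb}_{\mathbb S}\circ (X^\veps_t)^{-1}$ as a function on $L^2({\mathbb S})$ through $u\mapsto \varphi(\mathrm{Leb}_{\mathbb S}\circ u^{-1})$, whose Fréchet derivative is $x\mapsto \partial_\mu\varphi(\mu,u(x))$ and whose second Fréchet derivative decomposes into the "diagonal" term $\nabla_y\partial_\mu\varphi(\mu,u(x))\,\mathbbm 1_{x=y}$-type contribution and the genuinely bilinear term built from $\partial^2_\mu\varphi(\mu,u(x),u(y))$. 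Applying the infinite-dimensional Itô formula along $X^\veps$ then produces: a Laplacian drift term $\int_{\mathbb S}\partial_\mu\varphi(\mu^\veps_s)(X^\veps_s(x))\,\Delta X^\veps_s(x)\,dx$, a martingale term against $e^{\veps\Delta}dW$, a reflection term $\int_0^t\int_{\mathbb S}\partial_\mu\varphi(\mu^\veps_s)(X^\veps_s(x))\,d(e^{\veps\Delta}\eta_s)(x)$, and the two quadratic-variation terms involving $F^\veps_1,F^\veps_2$ obtained from the covariance $\sum_k \lambda_k^2 (e^{\veps\Delta}e_k)(x)(e^{\veps\Delta}e_k)(y)$.

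The second step is to integrate the Laplacian term by parts in space: $\int_{\mathbb S}\partial_\mu\varphi(\mu^\veps_s)(X^\veps_s(x))\,\Delta X^\veps_s(x)\,dx = -\int_{\mathbb S}\nabla_y\partial_\mu\varphi(\mu^\veps_s)(X^\veps_s(x))\,[DX^\veps_s(x)]^2\,dx$, which is exactly the shape of the first term on the right-hand side of \eqref{eq:main:ito}. Here I would use that $X^\veps_s$ is smooth in space, that $\partial_\mu\varphi$ is $C^1$ in its spatial argument with the stated continuity, and that $x\mapsto\partial_\mu\varphi(\mu^\veps_s)(X^\veps_s(x))$ is $1$-periodic so no boundary terms appear. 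The third step is the crucial one: showing that the reflection term vanishes in the limit, i.e. that the Lions derivative is orthogonal to $d\eta$. The idea is that $y\mapsto\partial_\mu\varphi(\mu,y)$ — restricted to the range of a non-increasing profile — can be monotonised: along $U^2({\mathbb S})$ the relevant quantity is $x\mapsto \partial_\mu\varphi(\mu^\veps_s)(X^\veps_s(x))$, and because $X^\veps_s$ is itself in $U^2({\mathbb S})$ one should compare $\partial_\mu\varphi(\mu^\veps_s)(X^\veps_s(\cdot))$ against test functions that are themselves non-increasing, invoking \textbf{RSHE.2} together with \textbf{RSHE.4}. The cleanest route is probably to write $\partial_\mu\varphi(\mu^\veps_s)(X^\veps_s(x)) = \big(\partial_\mu\varphi(\mu^\veps_s)\circ q_{\mu^\veps_s}\big)(x)$ where $q_{\mu^\veps_s}=X^\veps_s$ is the symmetric quantile function, and to exploit that the composition with a quantile function turns $\partial_\mu\varphi$ into the $L^2$-gradient on $U^2({\mathbb S})$, which by convexity of the constraint set is orthogonal to the inward normal cone generated by $\eta$; concretely one bounds $\mathbb E\int_0^t e^{\veps\Delta}\big(\partial_\mu\varphi(\mu_s)(X_s)\big)\cdot d\eta_s$ between $0$ and something controlled by the modulus of continuity of $\partial_\mu\varphi$ times $\mathbb E\int_0^t e^{\veps\Delta}X_s\cdot d\eta_s$, the latter tending to $0$ by \textbf{RSHE.4}. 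This is where the "non-trivial bounds for the gradient of the solution" advertised in the abstract must enter: to justify the interchange of $e^{\veps\Delta}$ with the nonlinearity $\partial_\mu\varphi(\mu_s)(\cdot)$ up to an error that is uniformly small, one needs quantitative control on $\|DX_s\|$ in a suitable averaged sense, since $X_s\notin H^2_{\rm sym}$.

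The final step is to pass $\veps\searrow 0$ in each remaining term. For the martingale term one uses $e^{\veps\Delta}X_s\to X_s$ in $L^2$, boundedness and continuity of $\partial_\mu\varphi$, and an Itô-isometry/dominated-convergence argument; for the $F^\veps_1,F^\veps_2$ terms one uses $e^{\veps\Delta}e_k\to e_k$ together with the summability $\sum_k\lambda_k^2<\infty$ (which holds since $\lambda>1/2$) to get $F^\veps_1\to F_1$, $F^\veps_2\to F_2$ in the appropriate sense and dominated convergence in $x,y,s$; for the $\int_0^t\int_{\mathbb S}\nabla_y\partial_\mu\varphi(\mu^\veps_s)(X^\veps_s(x))[DX^\veps_s(x)]^2\,dx\,ds$ term one needs $DX^\veps_s\to DX_s$ in $L^2([0,t]\times{\mathbb S})$ (again via the gradient bounds, noting $X_s\in H^1_{\rm sym}$ in an integrated sense), plus joint continuity and boundedness of $\nabla_y\partial_\mu\varphi$. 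Finally $\varphi(\mu^\veps_t)\to\varphi(\mu_t)$ by continuity of $\varphi$ for ${\mathcal W}_2$ and the fact that $e^{\veps\Delta}$ is a ${\mathcal W}_2$-contraction on $U^2({\mathbb S})$. The main obstacle is unquestionably Step~3 — proving the orthogonality $\int_0^t \partial_\mu\varphi(\mu_s)(X_s)\cdot d\eta_s = 0$ rigorously, because it requires simultaneously (i) a monotonicity/convexity argument to place $\partial_\mu\varphi(\mu_s)(X_s(\cdot))$ in the right cone relative to \textbf{RSHE.2}, and (ii) quantitative gradient estimates on $X$ to control the commutator $[e^{\veps\Delta},\partial_\mu\varphi(\mu_s)(\cdot)]$ uniformly as $\veps\to 0$ — and these gradient bounds are precisely the technical heart of the paper.
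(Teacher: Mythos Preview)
Your approach is genuinely different from the paper's, and Step~3 contains a real gap.

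The paper never mollifies the solution and never confronts the reflection term directly. Instead it works with the discrete scheme $X^h_{n+1}=\bigl(e^{h\Delta}X^h_n+\int_{nh}^{(n+1)h}e^{((n+1)h-s)\Delta}dW_s\bigr)^*$ and its \emph{un-rearranged} interpolation $Y^h_t$. The decisive observation is that $\varphi$, being a function of the law only, is \emph{invariant under rearrangement}: $\varphi(X^h_{n+1})=\varphi\bigl((Y^h_{(n+1)h-})^*\bigr)=\varphi(Y^h_{(n+1)h-})$. Hence the rearrangement step --- the discrete analogue of $d\eta$ --- is invisible to $\varphi$, and the increment $\varphi(X^h_{n+1})-\varphi(X^h_n)$ coincides with the increment of $\varphi$ along the unreflected process $Y^h$, to which a Fourier-truncated finite-dimensional It\^o formula applies. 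No orthogonality argument is ever needed; the reflection term simply never appears. The gradient bounds in the paper are used not to control a commutator $[e^{\veps\Delta},\partial_\mu\varphi(\mu_s)(\cdot)]$ as you suggest, but to obtain uniform integrability of $\|DY^h_s\|_2^2$ (Lemma~\ref{lem unif est derivative p1} and Section~\ref{se:3}) so as to pass to the limit $h\to 0$ in the term $\int_0^t\int_{\mathbb S}\nabla_y\partial_\mu\varphi(\cdots)[DY^h_s(x)]^2\,dx\,ds$. The operator $e^{\veps\Delta}$ is applied only at the very end, to the \emph{initial condition}, to relax the assumption $X_0\in H^1$.

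Your Step~3, as written, does not go through. The integrand $x\mapsto\partial_\mu\varphi(\mu_s)(X_s(x))$ is of the form $g\circ X_s$ for a general bounded continuous $g$ and need not lie in $U^2({\mathbb S})$, so \textbf{RSHE.2} gives no sign on $\int e^{\veps\Delta}\bigl(\partial_\mu\varphi(\mu_s)(X_s)\bigr)\cdot d\eta_s$, and there is no mechanism to sandwich it by $\int e^{\veps\Delta}X_s\cdot d\eta_s$ via a ``modulus of continuity of $\partial_\mu\varphi$'' argument. The convexity/normal-cone heuristic you invoke is morally correct --- rearrangement-invariance of the lift $\tilde\varphi$ should force $\nabla\tilde\varphi(X_s)$ into the tangent cone of $U^2({\mathbb S})$ at $X_s$ --- but \textbf{RSHE.4} only asserts orthogonality of $d\eta$ with $X_s$ itself, not with arbitrary $g(X_s)$, and upgrading this requires either a much finer description of $\eta$ than \textbf{RSHE.1}--\textbf{4} provide or exactly the rearrangement-invariance trick the paper uses at the discrete level. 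In short: you have correctly located the crux, but the paper's route bypasses it entirely rather than attacking it head-on.
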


The striking point is that the reflection term does not appear in the expansion. Somehow, this says that the 
Lions derivative $x \in {\mathbb S} \mapsto \partial_\mu \varphi(\textrm{\rm Leb}_{\mathbb S} \circ X_t^{-1})(X_t(x))$, computed
at the current value of the solution, is 
orthogonal to the instantaneous 
reflecting term 
$d \eta_t$ in 
\eqref{eq:RSHE}. 
Equivalently, 
this provides a clear way to identify the generator ${\mathscr L}$ of the process $(\mu_t)_{t \geq 0}$, which may be regarded 
as a diffusion process with values in the space ${\mathcal P}_2({\mathbb R})$. In clear,
for $\mu \in {\mathcal P}_2({\mathbb R})$
with a {symmetric} quantile function $X \in U^2({\mathbb S})$ (which is uniquely defined) in $H^1({\mathbb S})$, 
 \begin{equation*} 
 \begin{split}
 {\mathscr L} \varphi(\mu) &= 
 - \int_{\mathbb S} 
 \partial_y \partial_{\mu} \varphi(\mu)(X(x)) [\nabla X(x)]^2 dx 
 +
 \frac12 \int_{\mathbb S} 
\partial_y \partial_{\mu} \varphi(\mu) (X(x)) F_1(x) dx 
\\
&\hspace{15pt} +
 \frac12 \int_{\mathbb S} \int_{\mathbb S} 
 \partial^2_{\mu} \varphi(\mu) (X(x),X(y)) F_2(x,y) dx \, dy.  
 \end{split}
 \end{equation*} 
 This is part of our paper to prove that $(X_t)_{t \geq 0}$ is in fact $H^1({\mathbb S})$-valued in positive time, thus justifying the existence of 
 the derivative of $X$ in the definition of ${\mathscr L}$ (when $\mu$ is chosen as $\mu_t$). 

The formula can be easily extended to drifted versions of the RSHE, namely when a term 
$B(\mu_t,X_t(x))dt$ is added to the right-hand side in 
\eqref{eq:RSHE}. We refer to a recent contribution on the subject, see 
\cite{delarueHammersley2024rshe_erg}, for detailed solvability results in this setting. As one may expect, 
the expansion in
Theorem 
	\ref{prop:ito:b}
	remains true, but with an additional term $\int_0^t \int_{\mathbb R} \partial_\mu \varphi(\mu_s)(X_s(x))B(\mu_s,X_s(x)) dx \, ds$
	in the right-hand side.

\subsection{Connection with the existing literature and prospects}

As explained in 
\cite{delarueHammersley2022rshe}, 
our original motivation in the construction of the process $(X_t)_{t \geq 0}$ was to obtain a Markov process, here denoted $(\mu_t)_{t \geq 0}$, 
with values in ${\mathcal P}_2({\mathbb R})$ with a strong Feller Markov semi-group, mapping in positive time bounded (measurable) functions defined on ${\mathcal P}_2({\mathbb R})$ 
onto Lipschitz continuous functions. 
This is the second main result of 
\cite{delarueHammersley2022rshe}, with the (time-dependent) Lipschitz constant returned by 
the semi-group
being integrable  in small time  when
 the parameter $\lambda$ {penalising} the 
 higher modes of the noise is in
 $(1/2,1)$. 
 This example of a probability-measure valued diffusion process is in the {wake} of earlier works on  
 Fleming-Viot processes (see \cite{DawsonMarch,Dawson,Stannat})
 and on 
 Wasserstein diffusions (see \cite{AndresvRenesse,DoringvRenesse,Konarovsky0,Konarovskyi,vRenesseSturm,Sturm}). 
 It should be also regarded (especially in presence of an additional drift $B(\mu_t,X_t(x))$)
as a McKean-Vlasov model with an infinite dimensional common noise, in connection 
with earlier works addressing finite or infinite dimensional common noises (see 
\cite{dawsonVaillancourt1995,KurtzXiong,KurtzXiong2,vaillancourt1988} for a small selection of references). 
In this regard, the It\^o expansion proved here should be seen as one step 
forward towards a comprehensive theory of second-order equation on the space of probability measures. 
For instance, it would be quite straightforward to prove here that the semi-group generated by 
\eqref{eq:RSHE} induces a viscosity solution to the second-order PDE 
driven by ${\mathscr L}$. A more ambitious program would be to study solutions in a more 
classical sense, with smooth or even less regular coefficients. We leave this for the future.

\subsection{Strategy of proof} 

The proof relies explicitly on the scheme introduced in 
\cite{delarueHammersley2022rshe}, which is recalled in 
Section \ref{se:2} right below. 
 {In essence},
we expand the test function $\varphi$ in Theorem 
	\ref{prop:ito:b}
	along this scheme and then pass to the limit 
	as the time discretisation parameter is sent to $0$.
	As a side result of our proof, 
	we establish a bound 
for the derivative 
of $X_t$ in $L^2(\Omega,H^1({\mathbb S}))$ at any $t>0$, which improves the bound 
in $L^2([0,T] \times \Omega,H^1({\mathbb S}))$
used in 
\cite{delarueHammersley2022rshe}
to address the tightness of the scheme. 
This bound is key to obtain, in Section \ref{se:3}, uniform integrability properties 
of the scheme in $H^1$
that are used subsequently in Section \ref{se:4} to 
pass to the limit
in the expansion of $\varphi$ when the mesh of the scheme is sent to $0$.

Throughout, $\| \cdot \|_p$ is the standard $L^p$ norm on ${\mathbb S}$ and
$D$ (or $\nabla$) stands for the standard derivative on ${\mathbb S}$. 
The notations $\lceil \cdot \rceil$ and $\lfloor \cdot \rfloor$ refer respectively to 
the ceil and floor operations. 

\section{{Approximation Scheme}}
\label{se:2}

\subsection{Rearrangement}

Throughout the paper, the rearrangement of an element $f \in L^2({\mathbb S})$ is the unique element  $f^*\in U^2({\mathbb S})$ such that $\textrm{\rm Leb}_{\mathbb S} \circ f = 
\textrm{\rm Leb}_{\mathbb S} \circ f^*$, and $f^* $satisfies the same semi-continuity properties
as those explained in Subsection
\ref{subse:1.1}.
A key inequality is 
(see \cite{Baernstein_correction,baernstein1989convRearrOnCirc} and
\cite[Theorem 8.1]{baernstein2019symmetrizationInAnalysis}):

\begin{lemma}[Riesz rearrangement inequality]
\label{lem:Riesz}
Let $f$, $g$ and $\ell$ be three measurable real-valued functions on 
${\mathbb S}$, such that $\| f \|_p < \infty$, 
$\| g \|_q < \infty$ and 
$\| \ell \|_r < \infty$ for $p,q,r \in [1,\infty]$ with $1/p+1/q+1/r=1$. Then,
\begin{equation*}
\int_{\bS} \int_{\bS} f(x) g(x-y) \ell(y) dx dy \leq \int_{\bS} \int_{\bS} f^*(x) g^*(x-y) \ell^*(y) dx dy.
\end{equation*}
\end{lemma}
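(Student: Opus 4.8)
The plan is to prove the inequality by successive reductions, concentrating the genuinely circle-specific difficulty into a final step that I would import from the cited work of Baernstein. Throughout write $I(f,g,\ell):=\int_{\bS}\int_{\bS} f(x)\,g(x-y)\,\ell(y)\,dx\,dy$. First I would check that $I$ is well defined and jointly continuous on $L^p(\bS)\times L^q(\bS)\times L^r(\bS)$. Viewing the integrand as a product of the three functions $(x,y)\mapsto f(x)$, $(x,y)\mapsto g(x-y)$, $(x,y)\mapsto \ell(y)$ on $\bS\times\bS$, the generalised Hölder inequality with exponents $p,q,r$ gives $|I(f,g,\ell)|\le \|f\|_p\,\|g\|_q\,\|\ell\|_r$, the norms collapsing to those on $\bS$ because $\bS$ has total mass $1$ and Lebesgue measure is translation invariant. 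This continuity will let me pass to limits at each stage. I would also record two elementary properties of the rearrangement: it preserves the law of its argument, hence $\int_{\bS} f=\int_{\bS} f^*$ and $\|f\|_s=\|f^*\|_s$ for all $s$; and it commutes with any non-decreasing pointwise map, so that $(f\vee c)^*=f^*\vee c$ and $(f+c)^*=f^*+c$ for every constant $c$.

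\textbf{Reduction to nonnegative functions.} The three functions are a priori signed, and since rearrangement does not split across positive and negative parts I cannot argue on $f^{\pm}$ directly. Instead I would exploit that adding constants changes both sides by the \emph{same} amount. Expanding $I(f+a,g+b,\ell+c)$ and using $\int_{\bS} u(x-y)\,dx=\int_{\bS} u$, every term carrying a constant factor reduces to a product of constants with the integrals $\int_{\bS}f,\int_{\bS}g,\int_{\bS}\ell$; as these are preserved by rearrangement, the difference $I(f,g,\ell)-I(f^*,g^*,\ell^*)$ is invariant under $f\mapsto f+a$, $g\mapsto g+b$, $\ell\mapsto \ell+c$. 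Truncating from below, set $f_n:=f\vee(-n)$ and likewise $g_n,\ell_n$, so that $f_n+n,g_n+n,\ell_n+n\ge 0$ and remain in the respective $L^s$ spaces. Granting the inequality for nonnegative functions and applying it to these shifts, the invariance just established cancels all constant contributions and yields $I(f_n,g_n,\ell_n)\le I(f_n^*,g_n^*,\ell_n^*)$. Since $|f_n|\le|f|$ and $f_n\to f$ pointwise, dominated convergence gives $f_n\to f$ in $L^p$ and, likewise, $f_n^*=f^*\vee(-n)\to f^*$ in $L^p$; the joint continuity of $I$ then transfers the inequality to $f,g,\ell$. It therefore suffices to treat nonnegative functions.

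\textbf{Reduction to indicator functions.} For nonnegative $f$ I would use the layer-cake identity $f=\int_0^\infty \mathbf{1}_{\{f>s\}}\,ds$ together with the fact that rearrangement acts on super-level sets, $\{f^*>s\}=\{f>s\}^*$, where $A^*\subseteq\bS$ denotes the centred symmetric arc with $|A^*|=|A|$. Substituting the layer-cake representations of $f,g,\ell$ into $I$ and applying Tonelli's theorem (all integrands nonnegative) writes $I(f,g,\ell)$ as the integral over $(s,t,u)\in(0,\infty)^3$ of $I(\mathbf{1}_A,\mathbf{1}_B,\mathbf{1}_C)$ with $A=\{f>s\}$, $B=\{g>t\}$, $C=\{\ell>u\}$, and $I(f^*,g^*,\ell^*)$ as the same integral with $A,B,C$ replaced by $A^*,B^*,C^*$. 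Hence the function inequality follows by integrating over $(s,t,u)$ the set inequality
\[
\int_{\bS}\int_{\bS}\mathbf{1}_A(x)\,\mathbf{1}_B(x-y)\,\mathbf{1}_C(y)\,dx\,dy\ \le\ \int_{\bS}\int_{\bS}\mathbf{1}_{A^*}(x)\,\mathbf{1}_{B^*}(x-y)\,\mathbf{1}_{C^*}(y)\,dx\,dy.
\]

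\textbf{The set inequality and the main obstacle.} Everything now rests on this three-arc inequality on $\bS$, which is the genuine heart of the matter and the step I expect to be hardest. On the line the analogous statement follows from Riesz's classical sliding argument, but on the circle the group structure obstructs the naive two-point symmetrisation used on $\R^n$: the reflection $x\mapsto 2a-x$ about any axis sends the kernel $g(x-y)$ to $g(y-x)$, so a simultaneous polarisation of $f,g,\ell$ fails to preserve the form unless the middle factor is already even. This is precisely the difficulty resolved by Baernstein's circle-adapted symmetrisation (his star-function/subordination technique), and I would invoke the set inequality in the form established in \cite{baernstein1989convRearrOnCirc,Baernstein_correction} and \cite[Theorem 8.1]{baernstein2019symmetrizationInAnalysis}. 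Combining that input with the two reductions above then completes the proof.
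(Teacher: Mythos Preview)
The paper does not actually prove this lemma: it is quoted as a known result with the references \cite{Baernstein_correction,baernstein1989convRearrOnCirc} and \cite[Theorem 8.1]{baernstein2019symmetrizationInAnalysis} given just before the statement, and no proof is supplied. Your proposal therefore goes further than the paper does, by sketching the standard reduction to the indicator (set) case and then invoking the very same references for that core step.

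Your reductions are correct. The trilinear form $I$ is indeed controlled by $\|f\|_p\|g\|_q\|\ell\|_r$ via the generalised H\"older inequality, giving the joint continuity you need. The constant-shift argument is the right way to handle signed functions here (and it is needed: the paper later applies the lemma in the proof of Lemma~\ref{lem keyineq} with $f=\ell=u\in L^2(\bS)$ possibly signed). All the cross terms in the expansion of $I(f+a,g+b,\ell+c)$ collapse, thanks to translation invariance and $|\bS|=1$, to polynomials in $a,b,c,\int f,\int g,\int\ell$, and since $\int f=\int f^*$ the difference $I(f,g,\ell)-I(f^*,g^*,\ell^*)$ is indeed shift-invariant. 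The truncation $f_n=f\vee(-n)$ converges in $L^p$ by dominated convergence, and $(f\vee(-n))^*=f^*\vee(-n)$ because rearrangement commutes with nondecreasing maps, so the passage to the limit is clean. The layer-cake reduction to indicators is standard once everything is nonnegative, using $\{f^*>s\}=\{f>s\}^*$.

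In short: your argument is sound, and its endpoint---the three-arc inequality on $\bS$---is precisely the content of the Baernstein references that the paper cites in lieu of a proof.
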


We now provide a useful estimate that addresses the concomitant effect of the periodic heat semigroup and the rearrangement operation. 

\begin{lemma}
	\label{lem keyineq}
	Let $u$ belong to $L^2(\bS)$ and $U$ be uniformly distributed on $[0,1]$ on the same space $(\Omega,{\mathcal F},\bP)$ as before. Then, 
\begin{equation}
	\label{eq keyineq}
	\begin{split}
		\E \bigl[\lVert \nabla e^{hU\Delta}u^*\rVert_2^2 \bigr]\leq \E\bigl[\lVert D e^{hU\Delta}u\rVert_2^2\bigr].
	\end{split}
\end{equation}
\end{lemma}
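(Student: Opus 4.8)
The statement compares the $H^1$-seminorm of the heat-smoothed rearrangement $e^{hU\Delta}u^*$ with that of the heat-smoothed original $e^{hU\Delta}u$, averaged over a uniform random time $hU$. My plan is to reduce everything to the Riesz rearrangement inequality (Lemma \ref{lem:Riesz}) applied to the heat kernel, after expressing the $H^1$-seminorm after heat flow as a double integral against (minus) a second derivative of the heat kernel.

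First I would write, for a fixed $s>0$, the heat semigroup on $\bS$ via its periodic kernel $p_s$, i.e. $e^{s\Delta}v(x) = \int_{\bS} p_s(x-y) v(y)\,dy$, so that $\nabla e^{s\Delta}v = (\nabla p_s)\ast v$ and hence
\begin{equation*}
\lVert \nabla e^{s\Delta}v\rVert_2^2 = \int_{\bS}\int_{\bS} v(x)\, q_s(x-y)\, v(y)\, dx\, dy,
\qquad q_s := -\,(\nabla p_s)\ast(\nabla p_{s})\!\downarrow = -\,D^2 p_{2s},
\end{equation*}
using integration by parts and the semigroup property $p_s\ast p_s = p_{2s}$; more precisely $\lVert\nabla e^{s\Delta}v\rVert_2^2 = \langle v, -\Delta e^{2s\Delta} v\rangle = \int\int v(x)\,(-D^2p_{2s})(x-y)\,v(y)\,dx\,dy$. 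The point of this rewriting is that the kernel appearing is $g_s(z) := -D^2 p_{2s}(z)$, a fixed even function of $z\in\bS$. If $g_s$ were itself \emph{nonnegative} and non-increasing on $[0,1/2]$, i.e. equal to its own rearrangement $g_s^*$, then Lemma \ref{lem:Riesz} would give directly
\begin{equation*}
\int_{\bS}\int_{\bS} u(x)\, g_s(x-y)\, u(y)\, dx\, dy \le \int_{\bS}\int_{\bS} u^*(x)\, g_s^*(x-y)\, (u^*)(y)\, dx\, dy = \int_{\bS}\int_{\bS} u^*(x)\, g_s(x-y)\, u^*(y)\, dx\, dy,
\end{equation*}
which is exactly the pointwise-in-$s$ version of \eqref{eq keyineq}; one would then integrate over $s=hU$. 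Here one should apply Riesz with exponents $p=q=2$ and $r=\infty$ (valid since $u\in L^2(\bS)$ and $g_s$ is bounded), noting $(u^*)^* = u^*$.

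The main obstacle is precisely that $g_s = -D^2 p_{2s}$ is \emph{not} everywhere nonnegative: $p_{2s}$ is a bump concentrated near $0$, so $-D^2p_{2s}$ is positive near $0$ but negative in an intermediate annulus before returning (slightly) positive near the antipode $1/2$ — so it is not its own symmetric decreasing rearrangement, and Riesz does not apply to $g_s$ as is. To get around this I would use the standard trick of writing $g_s$ as a superposition of indicator "layers" of its superlevel sets — but since $g_s$ changes sign, I instead decompose via the heat kernel's own monotonicity: average over the random time. The cleaner route, and the one I expect the authors take, is to avoid $-D^2$ altogether by using the contraction property $\E[\lVert \nabla e^{hU\Delta}u^*\rVert_2^2]$ $= \frac1h\int_0^h \langle \nabla e^{s\Delta}u^*, \nabla e^{s\Delta}u^*\rangle\,ds$ and the identity $\frac{d}{ds}\lVert e^{s\Delta}v\rVert_2^2 = -2\lVert\nabla e^{s\Delta}v\rVert_2^2$, which turns the left side into $\frac1h\big(\lVert u^*\rVert_2^2 - \E\,\lVert e^{h\Delta}u^*\rVert_2^2\big)\cdot\frac12$-type telescoping: precisely
\begin{equation*}
\E\bigl[\lVert \nabla e^{hU\Delta}u^*\rVert_2^2\bigr] = \frac{1}{2h}\Bigl(\lVert u^*\rVert_2^2 - \lVert e^{h\Delta}u^*\rVert_2^2\Bigr),
\end{equation*}
and likewise for $u$. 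Since $\lVert u^*\rVert_2 = \lVert u\rVert_2$ (rearrangement is $L^2$-isometric), the claim \eqref{eq keyineq} is then \emph{equivalent} to $\lVert e^{h\Delta}u^*\rVert_2^2 \ge \lVert e^{h\Delta}u\rVert_2^2$, i.e. to the statement that rearrangement increases the $L^2$ norm after heat smoothing. That last inequality \emph{does} follow cleanly from Riesz: $\lVert e^{h\Delta}v\rVert_2^2 = \int\int v(x)\,p_{2h}(x-y)\,v(y)\,dx\,dy$ and now $p_{2h}$ \emph{is} a symmetric non-increasing nonnegative kernel on $\bS$ (the periodic heat kernel at a fixed time is, by the Jacobi theta representation, even and decreasing on $[0,1/2]$), so $p_{2h} = p_{2h}^*$ and Lemma \ref{lem:Riesz} with $p=q=2$, $r=\infty$ gives $\lVert e^{h\Delta}u\rVert_2^2 \le \lVert e^{h\Delta}u^*\rVert_2^2$. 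Combining the two displayed identities with this inequality and the isometry $\lVert u\rVert_2 = \lVert u^*\rVert_2$ yields \eqref{eq keyineq}.

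So the real content is: (i) the energy-dissipation identity converting the averaged gradient bound into a difference of squared $L^2$ norms (elementary, via Parseval on the Fourier side, since $\E[e^{-2s\pi^2 m^2 \cdot}]$ integrates explicitly — I would do this mode by mode to avoid fuss about regularity of $u$); (ii) verifying $p_{2h}$ is symmetric decreasing on $\bS$, which I expect to cite rather than prove, or to deduce from $p_{2h}(x) = \sum_{n\in\Z} G_{2h}(x+n)$ with $G$ the Gaussian, each summand arranged symmetrically about its center — a short convexity/unimodality argument; and (iii) one application of the Riesz rearrangement inequality. The step I anticipate needing the most care is (ii), the unimodality of the periodic heat kernel, together with confirming the exponent bookkeeping in Riesz (we need $(u^*)^*=u^*$ and boundedness of $p_{2h}$, both immediate). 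No delicate estimates are required beyond these.
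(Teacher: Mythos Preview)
Your proposal is correct and follows essentially the same route as the paper: after abandoning the direct attempt via the sign-changing kernel $-D^2 p_{2s}$, you integrate the energy-dissipation identity $\tfrac{d}{ds}\lVert e^{s\Delta}v\rVert_2^2 = -2\lVert \nabla e^{s\Delta}v\rVert_2^2$ over $s\in[0,h]$, reduce \eqref{eq keyineq} to $\lVert e^{h\Delta}u^*\rVert_2^2 \ge \lVert e^{h\Delta}u\rVert_2^2$ via the equimeasurability $\lVert u^*\rVert_2=\lVert u\rVert_2$, and conclude by one application of Riesz (Lemma~\ref{lem:Riesz}) with the nonnegative symmetric-decreasing kernel $p_{2h}=\Gamma_{2h}$. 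The paper does exactly this, only deriving the same telescoping identity through the kernel representation and the heat equation $\partial_s\Gamma_{2s}=2D_x^2\Gamma_{2s}$ rather than by Parseval; the fact $\Gamma_{2h}^*=\Gamma_{2h}$ that you flag as step~(ii) is cited there from \cite[Lemma~2.8]{delarueHammersley2022rshe}.
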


When $h=0$, the result coincides with the so-called 
Pólya–Szeg\"o inequality, see \cite[Theorems 3.6 \& 7.4]{baernstein2019symmetrizationInAnalysis}.

\begin{proof}
Writing $\Gamma_t(x-y)$ 
for the transition kernel, at time $t$ and between $x$ and $y$ in ${\mathbb S}$, 
of the heat kernel on the torus (see (2.1) in \cite{delarueHammersley2022rshe}), we have the following two identities:
\begin{equation*}
	\begin{split}
		\lVert D e^{s \Delta}u^*\rVert_2^2 &=   \int_{\bS}\left(\int_{\bS} D \Gamma_s(x-y) u^*(y)dy\right)^2dx
		\\ 
		&=   \int_{\bS}\int_{\bS} \int_{\bS} D \Gamma_s(x-y)u^*(y)D \Gamma_s(x-z)u^*(z)dydzdx.\\
		\end{split}
		\end{equation*}
		Observe that 
		\begin{equation*}
		\begin{split}
		 \int_{\bS} D \Gamma_s(x-y) D \Gamma_s(x-z) dx = 
		 D_y D_z \int_{\bS}  \Gamma_s(x-y) \Gamma_s(x-z) dx
&		 = D_y D_z \Bigl[ \Gamma_{2s}(y-z) \Bigr]
\\
&= - D^2_x \Gamma_{2s} (y-z).
\end{split}
		\end{equation*}
		Therefore, 
\begin{equation}
\label{eq:D_tGamma}
		\begin{split}
		\lVert D e^{s \Delta}u^*\rVert_2^2
		&=  - \int_{\bS}\int_{\bS}u^*(y)D_x^2 \Gamma_{2s}(y-z)u^*(z)dydz 
		\\
		&=  - \frac12 \int_{\bS}\int_{\bS}u^*(y)D_s \bigl[ \Gamma_{2s}(y-z) \bigr]u^*(z)dydz.\\
	\end{split}
\end{equation}
By integrating the above equality in $s$ over $[0,h]$ and by applying Lemma \ref{lem:Riesz}
(recalling from \cite[Lemma 2.8]{delarueHammersley2022rshe} that $\Gamma_{2s}^*=\Gamma_{2s}$), we get
\begin{equation*}
	\begin{split}
		\int_0^h \lVert D e^{s \Delta}u^*\rVert_2^2 ds 
		&= \frac12  \biggl[ \int_{\bS} u^*(y)^2 dy -  \int_{\bS}\int_{\bS}u^*(y) \Gamma_{2h}(y-z)u^*(z)dydz \biggr] \\
		&\leq  \frac12  \biggl[  \int_{\bS} u (y)^2 dy - \int_{\bS}\int_{\bS} u (y) \Gamma_{2h}(y-z)u (z)dydz \biggr] =  \int_0^h\lVert D e^{s \Delta}u \rVert_2^2 ds,
	\end{split}
\end{equation*}
the last line being obtained by reverting back the computations in \eqref{eq:D_tGamma}.
\end{proof}

\subsection{Scheme} 
\label{subse:3.2}

The construction of a solution to 
\eqref{eq:RSHE}  achieved in \cite{delarueHammersley2022rshe} goes through the scheme:
\begin{equation}
\label{eq scheme n}
	\begin{split}
		X^{h}_{n+1}=  \left(  e^{h\Delta}X^h_{n}+\int_{nh}^{(n+1)h} e^{([n+1]h-s)\Delta}dW_{s} \right)^*, \quad n \in {\mathbb N}_0; 
		\quad X^h_0=  X_0,
	\end{split}
\end{equation}
for a given initial condition $X_0 \in {U^2({\mathbb S})}$
and for a step $h>0$.  
Below, we introduce the following interpolation between times
$nh$ and $(n+1)h$,  
\begin{equation}
\label{eq:Yth}
    Y^h_{t}=e^{(t-nh) \Delta}X^h_{n}+\int^t_{nh} e^{(t-s)\Delta}dW_{s}, \quad 
    t \in [nh,(n+1)h).
\end{equation}
We start with the following lemma:

\begin{lemma} 
\label{lem unif est derivative p1}
Assume that $X_0$ takes values in $H^1({\mathbb S})$ and satisfies 
${\mathbb E} [ \| D X_0 \|_2^{2p}  ] < \infty$
for some $p \geq 1$.
Then, for any real $T>0$, there exists a constant $C_{T,p}$ {(independent of $X_0$)} such that, for any integer $n \geq 0$
such that $n h \leq T$ 
\begin{equation}
	\begin{split}
	\E  & \left[ \left\lVert DX^h_n  \right\rVert_2^{2p }   \right] \leq   
	C_{T,p} \Bigl( 1 + 
	\E   \left[ \left\lVert DX_0  \right\rVert_2^{2p }   \right] \Bigr). 
	\end{split}
\end{equation}
\end{lemma}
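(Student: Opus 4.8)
The plan is to establish the estimate by an inductive argument over $n$, controlling the growth of $\E[\|DX^h_n\|_2^{2p}]$ at each step. The key structural input is the interplay between two effects appearing in the scheme \eqref{eq scheme n}: the rearrangement operation (which, by Lemma \ref{lem keyineq}, does not increase a suitably averaged $H^1$-type seminorm involving the heat semigroup) and the stochastic convolution term (which contributes a bounded increment). More precisely, I would fix the integer $n$ with $nh \le T$, condition on $\mathcal F_{nh}$, and write $X^h_{n+1} = (Z^h_{n+1})^*$ with $Z^h_{n+1} := e^{h\Delta}X^h_n + \int_{nh}^{(n+1)h} e^{([n+1]h - s)\Delta} dW_s$. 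The first reduction is to pass from $\|DX^h_{n+1}\|_2$ to a quantity to which Lemma \ref{lem keyineq} applies. Since the rearrangement is applied to $Z^h_{n+1}$ directly rather than to a time-averaged object, I expect one needs to insert an auxiliary averaging step: introduce an independent uniform random variable $U$ on $[0,1]$ and compare $\|D e^{hU\Delta} (Z^h_{n+1})^*\|_2$ with $\|D e^{hU\Delta} Z^h_{n+1}\|_2$ via \eqref{eq keyineq}, then recover $\|D(Z^h_{n+1})^*\|_2$ from the $U=0$ endpoint using that $\|D e^{s\Delta} f\|_2$ is monotone (non-increasing) in $s$, so that the $U$-average dominates (up to a controllable constant) the value at $0$ — or, alternatively, bound $\|D(Z^h_{n+1})^*\|_2^2$ directly by $\frac1h\int_0^h \|D e^{s\Delta}(Z^h_{n+1})^*\|_2^2\,ds$ plus a remainder and absorb.

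Second, I would estimate the "pre-rearrangement" quantity $\E\bigl[\|D e^{hU\Delta} Z^h_{n+1}\|_2^{2p} \mid \mathcal F_{nh}\bigr]$. Splitting $Z^h_{n+1}$ into its deterministic-in-$\mathcal F_{nh}$ part $e^{h\Delta}X^h_n$ and the stochastic convolution, one uses: (i) the contractivity $\|D e^{(1+U)h\Delta} X^h_n\|_2 \le \|D X^h_n\|_2$ for the first piece (the extra smoothing from the semigroup only helps), and (ii) moment bounds for the $H^1$-norm of the stochastic convolution $\int_{nh}^{(n+1)h} e^{([n+1]h - s)\Delta}dW_s$, which are uniform in $n$ and $h \le T$ because $\sum_m \lambda_m^2 m^2 \int_0^h e^{-2m^2 s}\,ds$ is bounded (indeed $O(1)$ using $\lambda_m \asymp m^{-\lambda}$ with $\lambda > 1/2$, and in fact this is exactly the computation appearing in \cite{delarueHammersley2022rshe}). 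Combining via the triangle inequality in $L^{2p}(\Omega)$ and Burkholder--Davis--Gundy (or a direct Gaussian moment computation, since the convolution is Gaussian) yields, after taking full expectations,
\begin{equation*}
\E\bigl[\|DX^h_{n+1}\|_2^{2p}\bigr] \le (1 + C h)\,\E\bigl[\|DX^h_n\|_2^{2p}\bigr] + C h,
\end{equation*}
or an inequality of this form (possibly with $Ch$ replaced by $C\sqrt h$ or a term that is summable over the $\lfloor T/h\rfloor$ steps in a way that stays bounded).

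Third, I would iterate this one-step bound. Unrolling the recursion over $n \le T/h$ steps gives $\E[\|DX^h_n\|_2^{2p}] \le e^{CT}\bigl(\E[\|DX_0\|_2^{2p}] + C'\bigr)$ for a constant depending only on $T$ and $p$, which is the claimed estimate; the independence of the constant from $X_0$ is automatic since $X_0$ enters only through the initial term.

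The main obstacle I anticipate is making the first reduction rigorous: Lemma \ref{lem keyineq} compares the $U$-averaged squared $H^1$-seminorms, whereas here we want a bound on the (unaveraged) quantity $\|D X^h_{n+1}\|_2^{2p}$ at the specific time $h$, and moreover in $L^{2p}(\Omega)$ rather than $L^1(\Omega)$. The cleanest route is probably to observe that, since $s \mapsto \|D e^{s\Delta} f\|_2^2$ is non-increasing, one has $\|D e^{h\Delta}(Z^h_{n+1})^*\|_2^2 \le \frac1h\int_0^h \|D e^{s\Delta}(Z^h_{n+1})^*\|_2^2\,ds = \E[\|D e^{hU\Delta}(Z^h_{n+1})^*\|_2^2 \mid \mathcal F_{nh}, Z^h_{n+1}]$, apply Lemma \ref{lem keyineq} to bound this by the analogous quantity for $Z^h_{n+1}$, and then note $\frac1h\int_0^h\|De^{s\Delta}Z^h_{n+1}\|_2^2\,ds \le \|DZ^h_{n+1}\|_2^2$ when $Z^h_{n+1}\in H^1$ (with the integral finite in general). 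This handles the $p=1$ case directly; for $p > 1$ one raises to the power $p$ and uses Jensen in the $U$-average together with the conditional-Gaussian moment bounds above. An alternative, if the semigroup-averaging proves awkward, is to go back to the proof of Lemma \ref{lem keyineq} and extract a pointwise-in-time statement of the form $\|D e^{h\Delta} f^*\|_2 \le \|D e^{h\Delta}f\|_2$ is \emph{false} in general, so the averaging device genuinely seems necessary — this is why I expect this to be the delicate point. The remaining steps are routine once this is in place.
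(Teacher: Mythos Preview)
Your first reduction is actually simpler than you make it: the $h=0$ case of Lemma~\ref{lem keyineq} is exactly P\'olya--Szeg\H{o}, so $\lVert DX^h_{n+1}\rVert_2 = \lVert D(Z^h_{n+1})^*\rVert_2 \le \lVert DZ^h_{n+1}\rVert_2$ holds pointwise, with no averaging device needed at this stage. The obstacle you anticipate is therefore not where the difficulty lies.

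The genuine gap is in the one-step recursion itself. The $H^1$-seminorm of the stochastic convolution over one step satisfies
\[
\E\Bigl[\Bigl\lVert D\!\int_{nh}^{(n+1)h} e^{([n+1]h-s)\Delta}\,dW_s\Bigr\rVert_2^2\Bigr]
\;\asymp\; \sum_{m\ge 1}\lambda_m^2\bigl(1-e^{-8\pi^2 m^2 h}\bigr)
\;\asymp\; h^{\min(\lambda-\frac12,1)},
\]
which is $O(1)$ but \emph{not} $O(h)$ when $\lambda<\tfrac32$. Your recursion then reads $\E[\lVert DX^h_{n+1}\rVert_2^2]\le \E[\lVert DX^h_n\rVert_2^2]+Ch^{\min(\lambda-1/2,1)}$, and iterating over $\lfloor T/h\rfloor$ steps produces a bound of order $Th^{\min(\lambda-1/2,1)-1}$, which blows up as $h\to 0$ for the whole range $\lambda\in(\tfrac12,\tfrac32)$---precisely the regime of interest in the paper. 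Your hedge (``possibly $C\sqrt h$ or a term that is summable'') does not rescue this: the increments are simply not summable step by step.

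The paper's proof avoids this by \emph{not} working one step at a time. It unrolls the scheme from $m$ all the way back to some $N$, carrying along a factor $e^{h(k-1)\tilde U\Delta}$ of accumulated heat smoothing on the stochastic increment that entered $k$ steps earlier. This is where Lemma~\ref{lem keyineq} (with $\tilde U$) is genuinely used: at each unrolling step the rearrangement is removed at the cost of passing to the $\tilde U$-averaged semigroup, and the contraction of the semigroup converts $e^{h(1+n\tilde U)\Delta}$ into $e^{h(n+1)\tilde U\Delta}$, incrementing the smoothing index. The $k$-th old increment then contributes an amount of order $h\cdot\bigl((k-1)h\bigr)^{-1/2-(1-\lambda)_+}$ (see the estimates for $R^{1,m,0}_k$), and since $-\tfrac12-(1-\lambda)_+>-1$ this \emph{is} summable in $k$ up to $m-N\le T/h$. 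For $p>1$ there are additional cross terms handled by Burkholder--Davis--Gundy, and the argument closes via a small-interval absorption (choosing $(m-N)h\le\varepsilon$ small). The smoothing-by-accumulation mechanism is the missing idea in your plan.
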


\begin{proof}
Following  Lemma \ref{lem keyineq}, 
we introduce a uniformly distributed [0,1]-valued random variable  $\tilde{U}$ constructed on a separate probability space 
$(\tilde \Omega,\tilde{\mathcal F},\tilde{\mathbb P})$ 
and then elevate  
$(  \Omega, {\mathcal F} ,{\mathbb P})$  to a product with $(\tilde \Omega,\tilde{\mathcal F},\tilde{\mathbb P})$. One may denote by 
$\tilde{\mathbb E}$, the integral with respect to $\tilde{\mathbb P}$. 
\vskip 4pt

\noindent \textit{First Step.} 
We 
claim that, for any $m,n,N \in {\mathbb N}_0$, with $m  \geq  N$,  
\begin{equation}
	\label{eq:IV:44}
	\begin{split}
	 \Bigl\|  
		D \Bigl( e^{n h \tilde U \Delta}   X_{m}^h \Bigr) \Bigr\|_2^{2} \leq    \tilde{\mathbb E} 
		\biggl[ \Bigl\|  
		D\Bigl(  e^{ h (m+n -N) \tilde{U} \Delta} X_{N}^h \Bigr) \Bigr\|_2^{2}
		\biggr] + \tilde{\mathbb E} \biggl[ \sum_{k=1}^{m-N} \Bigl( R^{1,m,n}_k + 2 R^{2,m,n}_k \Bigr) \biggr],
	\end{split}
\end{equation}
\vspace{-8pt}
\\
with the two notations {(below, $w_t(x)$ is a shortened notation for the derivative $D   W_t(x)$)}
\begin{equation*} 
\begin{split}
&R^{1,m,n}_k :=  
		\bigg\| 
		e^{ h (n+k-1)  \tilde{U} \Delta} \int_{(m-k)h}^{(m-k+1)h} e^{[(m-k+1)h-s] \Delta}
		d {w}_s
		\biggr\|_2^2
\\
&R^{2,m,n}_k :=		2  \biggl\langle 
		D
		\Bigl( 
		e^{h(1+ (n+ k-1) \tilde U) \Delta} 
		X_{m-k}^h
		\Bigr), 
		\int_{(m-k)h}^{(m-k+1)h} e^{[(m-k+1)h-s] \Delta}
		d {w}_s
		\biggr\rangle. 
	\end{split}
\end{equation*}
The proof of 
\eqref{eq:IV:44} is by backward induction on $m$, starting from $m=N$. 
In the latter case, the bound is obvious. Now, if the bound 
is true for some 
{$m \geq N$}
 and any 
$n \in {\mathbb N}_0$, then one
may use   Lemma
\ref{lem keyineq} and the contraction property of the heat semi-group to get  
\begin{equation*}
	\begin{split}
		 \Bigl\|  
		D \Bigl( e^{n h \tilde U \Delta}   X_{m+1}^h \Bigr) \Bigr\|_2^{2} 
		&= \bigg\| 
		D
		\biggl[ e^{n h \tilde U \Delta}
		\biggl( 
		e^{h \Delta} 
		X_{m}^h
		+
		\int_{mh}^{(m+1)h} e^{[(m+1)h-s] \Delta}
		d W_s
		\biggr)^* \biggr]
		\biggr\|_2^2
		\\
		&\leq \bigg\| 
		D
		\biggl[ e^{n h \tilde U \Delta}
		\biggl( 
		e^{h \Delta} 
		X_{m}^h
		+
		\int_{m h}^{(m+1)h} e^{ [(m+1)h-s]\Delta}
		d W_s
		\biggr) \biggr]
		\biggr\|_2^2
		\\
		&\leq 
		\tilde{\mathbb E} 
		\Bigl[ \bigl\|  
		D e^{ ( n +1) h \tilde{U} \Delta} X_{m}^h \bigr\|_2^{2}
		\Bigr]
		+
		\bigg\| 
		e^{ n  h  \tilde{U} \Delta}
		\int_{m h}^{(m+1)h} e^{ [(m+1)h-s] \Delta}
		d {w}_s
		\biggr\|_2^2 
		\\
		&\hspace{15pt} +
		2 \biggl\langle 
		D
		\Bigl( 
		e^{h ( 1 + n \tilde U) \Delta} 
		X_{m}^h
		\Bigr), 
		\int_{m h}^{(m+1)h} e^{[(m+1)h-s] \Delta}
		d {w}_s
		\biggr\rangle, 
	\end{split}
\end{equation*}
Identity 
	\eqref{eq:IV:44} follows by induction. 
\vskip 4pt

\noindent \textit{Second Step:}
Estimate the $p^{th}$ moments of $R^{1,m}:=\tilde{\mathbb E}[\sum_{k=1}^{m-N} R^{1,m,0}_k]$ and \\ $R^{2,m}:=\tilde{\mathbb E}[\sum_{k=1}^{m-N} R^{2,m,0}_k]$. For $R^{1,m}$, 
note from 
the equivalence $\lambda_m \sim m^{-\lambda}$, that for $k \geq 2$,  
\begin{equation*}
\begin{split}
&\tilde{\mathbb E} 
\bigl[ R^{1,m,0}_k \bigr]
\leq c \tilde{\mathbb E} \sum_{\ell \in {\mathbb N}} 
\biggl(
e^{-4 \pi^2 h (k-1)  \tilde{U} \ell^2} 
\int_{(m-k)h}^{(m-k+1)h} 
\ell^{(1- \lambda)}
e^{-4 \pi^2[(m-k+1) h-s]  \ell^2} 
dB_s^{\ell}
\biggr)^2, 
\end{split}
\end{equation*}
and, by Jensen's inequality (\textcolor{black}{in the form 
$(\tilde{\mathbb E}\sum_{\ell \in {\mathbb N}} 
x_\ell^2 y_\ell^2)^p 
\leq ( \tilde{\mathbb E} \sum_{\ell \in {\mathbb N}} x_\ell^2 )^{p-1}
\tilde{\mathbb E} ( \sum_{\ell \in {\mathbb N}} x_\ell^2 y_\ell^{2p})$}), 
\begin{equation*}
\begin{split}
\Bigl(
\tilde{\mathbb E} 
\bigl[ R^{1,m,0}_k \bigr]
\Bigr)^p
&\leq c \biggl( 
\tilde{\mathbb E}
\sum_{\ell \in {\mathbb N}} 
\ell^{2(1-\lambda)}
e^{-8 \pi^2 (k-1) h \tilde{U} \ell^2} 
\biggr)^{p-1} 
\\
&\hspace{15pt} \times 
\tilde{\mathbb E} \biggl[ \sum_{\ell \in {\mathbb N}} 
\ell^{2(1-\lambda)}
e^{-8 \pi^2 (k-1) h \tilde{U} \ell^2} 
\biggl(\int_{(m-k)h}^{(m-k+1)h} 
e^{-4 \pi^2 [(m-k+1) h-s]  \ell^2} 
dB_s^{\ell}
\biggr)^{2p} \biggr].
\end{split}
\end{equation*}
Therefore, 
\begin{equation*}
\begin{split}
{\mathbb E} 
\Bigl[ 
\Bigl(
\tilde{\mathbb E} 
\bigl[ R^{1,m,0}_k \bigr]
\Bigr)^p
\Bigr] 
&\leq c \biggl( 
\tilde{\mathbb E}
\sum_{\ell \in {\mathbb N}} 
\ell^{2(1-\lambda)}
e^{-8 \pi^2 (k-1) h \tilde{U} \ell^2} 
\biggr)^{p-1} 
\\
&\hspace{-15pt} \times 
\tilde{\mathbb E} \biggl[ \sum_{\ell \in {\mathbb N}} 
\ell^{2(1-\lambda)}
e^{-8 \pi^2 (k-1) h \tilde{U} \ell^2} 
{\mathbb E} 
\biggl\{
\biggl(\int_{(m-k)h}^{(m-k+1)h} 
e^{-4 \pi^2 [(m-k+1) h-s]  \ell^2} 
dB_s^{\ell}
\biggr)^{2p} 
\biggr\} \biggr]
\\
&\leq c h^p
\biggl( 
\tilde{\mathbb E}
\sum_{\ell \in {\mathbb N}} 
\ell^{2(1-\lambda)}
e^{-8 \pi^2 (k-1) h \tilde{U} \ell^2} 
\biggr)^{p}.
\end{split}
\end{equation*}
Using the 
Gaussian estimate
$ \sum_{m\geq 1}   e^{-8 \pi^2 khUm^2}  m^{2(1-\lambda)_+}     \leq
 \int_0^{\infty}  e^{- 8 \pi^2  k h U x^2}   ( x^{2(1-\lambda)_+} +1 ) dx
 \leq c_\lambda [ (khU)^{-\frac12} + (khU)^{-\frac12 - (1-\lambda)_+} ]$, 
we get, for $2 \leq k \leq m-N$,
\begin{equation*}
\begin{split}
\biggl( \tilde{\mathbb E} \sum_{\ell \in {\mathbb N}} 
\ell^{2(1-\lambda)}
e^{-8 \pi^2 (k-1) h \tilde{U} \ell^2} 
\biggr)^p
&\leq c_{p,\lambda}  \Bigl\{ 1 + \bigl( (k-1) h  \bigr)^{-\tfrac{1}2-(1-\lambda)_+} \Bigr\}^p,
\end{split}
\end{equation*}
from which we deduce that, for $2 \leq k \leq m-N$, 
\begin{equation}
\label{eq:R1k}
\begin{split}
&
{\mathbb E} 
\Bigl[ 
\Bigl(
\tilde{\mathbb E} 
\bigl[ R^{1,m,0}_k \bigr]
\Bigr)^p 
\Bigr] 
\leq c_{p,\lambda} 
h^p 
\Bigl\{ 1+ 
\bigl( (k-1) h  \bigr)^{-\tfrac{1}2-(1-\lambda)_+} \Bigr\}^p.
\end{split}
\end{equation}
 When $k=1$ (and $m-N \geq 1$), 
 \begin{equation*}
\begin{split}
&{\mathbb E} 
\Bigl[ 
\Bigl( \tilde{\mathbb E} 
\bigl[ R^{1,m,0}_1 \bigr] \Bigr)^p
\Bigr] 
\leq 
c {\mathbb E} 
\biggl[ 
\biggl\{
\sum_{\ell \in {\mathbb N}} 
\ell^{-   (\lambda+\frac12)}
\biggl(
\int_{(m-1)h}^{m h} 
\ell^{\frac{5}4- \frac{\lambda}2}
e^{- 4 \pi^2 (m h-s)  \ell^2} 
dB_s^{\ell}
\biggr)^2 \biggr\}^p  \biggr].
\end{split}
\end{equation*}
By Jensen's inequality, 
 \begin{align}
{\mathbb E} 
\Bigl[ 
\Bigl( \tilde{\mathbb E} 
\bigl[ R^{1,m,0}_1 \bigr] \Bigr)^p
\Bigr] 
&\leq   c_{p,\lambda}
\sum_{\ell \in {\mathbb N}}
\ell^{- (\lambda+\frac12)}
\ell^{(\frac5{2}- \lambda) p} {\mathbb E} 
\biggl[ 
\biggl(
\int_{(m-1)h}^{m h} 
e^{- 4 \pi^2 (m h-s)  \ell^2} 
dB_s^{\ell}
\biggr)^{2p}  \biggr] \nonumber
\\
&\leq 
 c_{p,\lambda}
\sum_{\ell \in {\mathbb N}}
\ell^{- (\lambda+\frac12)}
\ell^{(\frac5{2}- \lambda) p} 
\biggl( 
\int_{0}^{ h} 
e^{- 8 \pi^2 s  \ell^2} 
ds
\biggr)^{p}
 \label{eq:R11}
\\
&\leq  c_{p,\lambda} \sum_{\ell \in {\mathbb N}} \ell^{- (\lambda+\frac12)} 
\ell^{ (\frac12 - \lambda)p}
\bigl( 1 - \exp(- 8 \pi^2  h \ell ^2) \bigr)^p
 \leq  
 c_{p,\lambda} 
h^{\min(\lambda-\frac12,1) \textcolor{black}{\frac{p}2}}, \nonumber
\end{align}
where we used the inequality $1-\exp(-x) \leq c_\lambda x^{\min(\lambda-\frac12,1)\textcolor{black}{\frac12}}$.  Therefore, by defining $b_{k,h}: = 1 + ((k-1) h)^{-\tfrac{1}2-(1-\lambda)_+}$,
we deduce that, from \eqref{eq:R11} and \eqref{eq:R1k},
\begin{equation*}
	\begin{split}
%
	  {\mathbb E} \bigl[ (R^{1,m})^p   \bigr]  
		&\leq c_p
		{\mathbb E} 
\Bigl[ 
\Bigl( \tilde{\mathbb E} 
\bigl[ R^{1,m,0}_1 \bigr] \Bigr)^p
\Bigr] 
+
c_p	{\mathbb E} 
\biggl[ 
\biggl( \tilde{\mathbb E} 
\biggl[ \sum_{k=2}^{m-N} R^{1,m,0}_k \biggr] \biggr)^p
\biggr]
\\
&\leq			
c_{p,\lambda} h^{\min(\lambda-\frac12,1) \textcolor{black}{\frac{p}2}}
+ 
c_p	 
\biggl(
\sum_{k=2}^{m-N} b_{k,h} \biggr)^{p-1} 
\biggl( 
\sum_{k=2}^{m-N} b_{k,h} b_{k,h}^{-p}
{\mathbb E} 
\Bigl[
\Bigl( \tilde{\mathbb E} \bigl[ R^{1,m,0}_k \bigr] \Bigr)^p
\Bigr] \biggr)
		\\
		&\leq 
c_{p,\lambda} h^{\min(\lambda-\frac12,1) \textcolor{black}{\frac{p}2}}
+ 
c_{p,\lambda}
h^p
\biggl(
\sum_{k=2}^{m-N} b_{k,h} \biggr)^{p}.
	\end{split}
\end{equation*}
Noting that $ 
\sum_{k\geq 2}^{m-N} 
[b_{k,h}-1]\color{black}= \sum_{k\geq 1}^{m-N-1} 
( k h)^{-\tfrac{1}2-(1-\lambda)_+}
  \leq 
  h^{-1} \int_0^{h (m-N)}
x^{-\frac{1}2-(1-\lambda)_+}
dx$, we get 
\begin{equation}
\label{eq:sum:bkh}
	\begin{split}
%
			 & {\mathbb E} \bigl[ (R^{1,m})^p   \bigr]  
 \leq 
c_{p,\lambda} h^{\min(\lambda-\frac12,1) \textcolor{black}{\frac{p}2}}
+ 
c_{p,\lambda}
h^p
\biggl(
m-N + h^{-1} \int_0^{h (m-N)}
x^{-\frac{1}2-(1-\lambda)_+}
dx
 \biggr)^{p}
 \\
 &\leq 
 c_{p,\lambda} h^{\min(\lambda-\frac12,1) \textcolor{black}{\frac{p}2}}
+ 
c_{p,\lambda,T}
\bigl( h (m-N) 
 \bigr)^{p (\frac12-(1-\lambda)_+)}
 \leq 
c_{p,\lambda,T}
\bigl( h (m-N) 
 \bigr)^{ \min(\lambda-\frac12,\frac12)\textcolor{black}{\frac{p}2}}, 
 \end{split}
 \end{equation} 
with the last line following from 
$\frac12 -(1-\lambda)_+
= \min(\lambda- \frac12 ,\frac12)$ together with the bound 
$h m \leq T+1$. 
This gives a bound for $R^{1,m}$.
\vskip 4pt

\noindent \textit{Third Step.}
For the treatment of $R^{2,m}$, 
using the symmetry of the Laplace operator and 
the
Burkholder-Davis-Gundy inequality (see Theorem 4.36 in 
\cite{daPratoZabczyk2014stochEqnsInfDim}), one may write
\begin{align}
&{\mathbb E} 
\Bigl[ 
\bigl( 
 R^{2,m} \bigr)^p
\Bigr] \nonumber
		\\
		&=
		{\mathbb E} 
		\Biggl[ \Biggl(
		\sum_{k=1}^{m-N}
		\biggl[ \Bigl\langle 
		 D
		X_{m-k}^h
, 
\tilde{\mathbb E}		\int_{(m-k)h}^{(m-k+1)h} e^{[(m-k+1)h
		+
		h(1+ (k-1) \tilde U)  
		-s] \Delta}
		d   w_s
		\Bigr\rangle \biggr]
		\Biggr)^p   \Biggr] \nonumber
\\
		&\leq 
		c_p
		{\mathbb E} 
		\Biggl[ \Biggl(
		\sum_{k=1}^{m-N}
		\biggl[ 
		\bigl\|
		 D
		X_{m-k}^h
\bigr\|_2^2
 \Bigl[ \int_{(m-k)h}^{\cdot}
\tilde{\mathbb E}	\Bigl(	 e^{((m-k+1)h
		+
		h(1+ (k-1) \tilde U)  
		-s) \Delta}
		\Bigr)
		d  w_s
		\Bigr]_{(m-k+1)h}
	 \biggr]
		\Biggr)^{p/2}  \Biggr] \nonumber
		\\
		&\leq c_{p}
		{\mathbb E} 
		\Biggl[
		\Biggl(
		 \sum_{k=1}^{m-N}
		\Bigl\{
		 \bigl\|
		D X_{m-k}^h
		 \bigr\|_2^2 
		 {\mathbb E}
		 \Bigl(
		\tilde{\mathbb E} 
\bigl[ R^{1,m,0}_k \bigr]
\Bigr)
\Bigr\}
 	\Biggr)^{p/2}  \Biggr],
		\label{eq u2 gen mean}
\end{align}
\textcolor{black}{where we used stochastic Fubini's theorem to pass the expectation symbol 
$\tilde{\mathbb E}$ in the fourth line}.
{In the third line, $[ M_\cdot ]_t$ denotes the bracket at time $t$ of a martingale process $M_\cdot$.}
We split
\eqref{eq u2 gen mean}
as follows: %
\begin{equation*}
\begin{split}
&{\mathbb E} 
\Bigl[ 
\bigl( R^{2,m} \bigr)^p
\Bigr]
\\
& \leq  c_{p}
		{\mathbb E} 
		\Bigl[
		 \bigl\|
		D X_{m-1}^h
		 \bigr\|_2^p 
		 \Bigr]
		 {\mathbb E}
		 \Bigl(
		\tilde{\mathbb E} 
\bigl[ R^{1,m,0}_1 \bigr]
\Bigr)^{p/2}
+
c_p		{\mathbb E} 
		\Biggl[
		\Biggl(
		 \sum_{k=2}^{m-N}
		\Bigl\{
		 \bigl\|
		D X_{m-k}^h
		 \bigr\|_2^2 
		 {\mathbb E}
		 \Bigl(
		\tilde{\mathbb E} 
\bigl[ R^{1,m,0}_k \bigr]
\Bigr)
\Bigr\}
 	\Biggr)^{p/2}  \Biggr].	
\end{split}
\end{equation*}
Inserting
  \eqref{eq:R1k} and 
   \eqref{eq:R11}
   and recalling from
   \eqref{eq:sum:bkh}
   that 
   $h \sum_{k=2}^{m-N} b_{k,h} \leq
   c_{\lambda,T}$
  \begin{equation*}
\begin{split}
{\mathbb E} 
\Bigl[ 
\bigl( R^{2,m} \bigr)^p
\Bigr]
&\leq  c_{p,\lambda}
		{\mathbb E} 
		\Bigl[
		 \bigl\|
		D X_{m-1}^h
		 \bigr\|_2^p 
		 \Bigr]
h^{\min(\lambda-\frac12,1) \textcolor{black}{\frac{p}4}}
\\
&\hspace{15pt} +
c_{p,\lambda}		{\mathbb E} 
		\Biggl[
		\Biggl(h 
		 \sum_{k=2}^{m-N}
		\Bigl\{
		 \bigl\|
		D X_{m-k}^h
		 \bigr\|_2^2 
\Bigl( 1+ 
\bigl( (k-1) h  \bigr)^{-\tfrac{1}2-(1-\lambda)_+} \Bigr)
\Bigr\}
 	\Biggr)^{p/2}  \Biggr]
	\\
&\leq  c_{p,\lambda}
		{\mathbb E} 
		\Bigl[
		 \bigl\|
		D X_{m-1}^h
		 \bigr\|_2^p 
		 \Bigr]
h^{\min(\lambda-\frac12,1) \textcolor{black}{\frac{p}4}}
\\
&\hspace{15pt} +
c_{p,\lambda}		h 
		 \sum_{k=2}^{m-N}
\Bigl( 1+ 
\bigl( (k-1) h  \bigr)^{-\tfrac{1}2-(1-\lambda)_+} \Bigr)
		{\mathbb E} 
		\Bigl[
		 \bigl\|
		D X_{m-k}^h
		 \bigr\|_2^{p} 
\Bigr].	
\end{split}
\end{equation*}
Collect the above bounds for $R^{2,m}$ 
and the bound \eqref{eq:sum:bkh} for $R^{1,m}$ and plug them into \eqref{eq:IV:44}. 
Assuming that $(m-N)h \leq \varepsilon$ for some 
$\varepsilon \in [h,1)$ (which \textcolor{black}{is possible since} $h<1$),
recalling that $\frac12 - (1-\lambda)_+
= \min(\frac12,\lambda-\frac12)$ and repeating 
\eqref{eq:sum:bkh}, 
one has
\begin{equation*}
\begin{split}
&\sup_{m \geq N : h (m-N) \leq \varepsilon}
	{\mathbb E} 
	\Bigl[ 
	\bigl\|  
	D X_{m}^h\bigr\|_2^{2p}
	\Bigr] 
	\\
&\hspace{15pt}	\leq 
	c_{p,\lambda,T} \biggl( 1+ 
		{\mathbb E} 
	\Bigl[ 
	\bigl\|  
	D X_{N}^h\bigr\|_2^{2p} \Bigr]
	+
	\varepsilon^{\min(\frac12,\lambda-\frac12)\textcolor{black}{\frac12}}
	\sup_{m \geq N : h (m-N) \leq \varepsilon}
	{\mathbb E} 
	\Bigl[ 
	\bigl\|  
	D X_{m}^h\bigr\|_2^{2p}
	\Bigr]
	\biggr),
	\end{split}
\end{equation*}
from which we deduce that, for $\varepsilon \leq \varepsilon_{p,\lambda,T}$ small enough (the threshold
$\varepsilon_{p,\lambda,T}$ being strictly positive and only depending on 
$p$, $\lambda$ and $T$),
\begin{equation*}
\begin{split}
&\sup_{m \geq N : h (m-N) \leq \varepsilon}
	{\mathbb E} 
	\Bigl[ 
	\bigl\|  
	D X_{m}^h\bigr\|_2^{2p}
	\Bigr] 
	\leq 
	c_{p,\lambda,T} \Bigl( 1+ 
		{\mathbb E} 
	\Bigl[ 
	\bigl\|  
	D X_{N}^h\bigr\|_2^{2p}
	\Bigr] \Bigr),
	\end{split}
\end{equation*}
observing by induction that 
${\mathbb E}  [  \|	D X_{m}^h \|_2^{2p}
	]$
	is finite for $m >N$.
	(Notice that, when $h\geq  \varepsilon_{p,\lambda,T}$, 
	the above is obviously true since the set of indices in the left-hand side 
	reduces to the singleton $\{N\}$.)
	{The result follows by repeating the argument on intervals of length $\varepsilon_{p,\lambda,T}$}.
\end{proof}

\section{Uniform integrability of the scheme in $H^1$}
\label{se:3}

We let
$(\tilde X^h_t := (\lceil \nicefrac{t}{h} \rceil - \nicefrac{t}{h}  )X^h_{\lfloor \nicefrac{t}{h} \rfloor} + (\nicefrac{t}{h}-\lfloor \nicefrac{t}{h} \rfloor )X^h_{\lceil \nicefrac{t}{h} \rceil}
)_{t \geq 0}$
be the interpolation of   $(X^h_n)_{n \in {\mathbb N}_0}$. 
By 
\cite[Theorem 4.15]{delarueHammersley2022rshe}, 
$(\tilde X^h_t,W_t)_{t \geq 0}$
converges
in law
over 
$\cC([0,\infty), {H_{\rm sym}^{-1}({\mathbb S} )
\times 
L^2_{\rm sym}({\mathbb S} )})$
(space of continuous functions from $[0,\infty)$ to 
$H^{-1}_{\rm sym}({\mathbb S}) \times L^2_{\rm sym}({\mathbb S})$,
  equipped with the uniform topology on compact subsets) to 
  the  pair 
    $(X_t,W_t)_{t \geq 0}$ in \eqref{eq:RSHE} 
with $X_0$ as initial condition. 
For any $p \geq 1$ and $T >0$,  
\begin{equation}
\label{eq:Lp:tilde:Xh}
\sup_{h \in (0,1)} 
{\mathbb E} \bigl[\sup_{0 \le t \le T} \|\tilde X^h_t \|_2^{2p}
\bigr]
< \infty.
\end{equation} 
 
We draw two  consequences from 
\eqref{eq:Lp:tilde:Xh}.
Back to 
\eqref{eq:Yth}, we first notice that, for any $T>0$, 
\begin{equation*}
\begin{split} 
\sup_{s \in [0,T]}
\| Y_s^h - \tilde X_s^h \|_{2,-1} 
&\leq
\sup_{k \leq \lfloor  \nicefrac{T}{h} \rfloor}
\sup_{t \in [kh,(k+1)h]}
\Bigl\|
e^{(t-nh) \Delta}X^h_{n}+\int^t_{nh} e^{(t-s)\Delta}dW_{s}
- \tilde X_s^h
\Bigl\|_{2,-1}.
\end{split} 
\end{equation*}
Fix $A>0$. On the event 
$E^A
:=
\{ \sup_{0 \le t \le T} \| \tilde X^h_t \|_2 \leq A \} 
\cap 
\{ \sup_{0 \le t \le T} \| \int^t_{0} e^{(t-s)\Delta}dW_{s}  \|_2 \leq A \}$,
\begin{equation*}
\begin{split} 
\sup_{s \in [0,T]}
\| Y_s^h - \tilde X_s^h \|_{2,-1} 
& \leq 
2 \sup_{\| f \|_2 \leq A }
\sup_{s \in [0,h]}
\| e^{s \Delta} f - f\|_{2,-1} 
+ 
\sup_{\vert t-s \vert \leq h} 
\| \tilde X_t^h - 
\tilde X_s^h
 \|_{2,-1} 
\\
&\hspace{15pt} + 
\sup_{\vert t-s \vert \leq h} 
\Bigl\| \int^t_{0} e^{(t-r)\Delta}dW_{r}  - 
\int^s_{0} e^{(s-r)\Delta}dW_{r} 
 \Bigr\|_{2,-1}.
\end{split} 
\end{equation*}
By \eqref{eq:Lp:tilde:Xh}
and the fact  that the processes 
$(\tilde X^h_s)_{0 \leq s \leq T}$ are tight on 
${\mathcal C}([0,T];H^{-1}({\mathbb S}))$, we get
(this is our first direct consequence of \eqref{eq:Lp:tilde:Xh}):
\begin{equation}
\label{eq:convergence:P:probability}
\lim_{h \rightarrow 0} 
\sup_{s \in [0,T]}
\| Y_s^h - \tilde X_s^h \|_{2,-1} 
\underset{{\mathbb P}{\rm -probability}}= 0.
\end{equation} 
Here is now the second consequence. From \eqref{eq:Yth}, there exists a (universal) constant $c$ such that, 
for any $n \in {\mathbb N}_0$ and any 
$t \in [nh,(n+1) h)$, 
\begin{equation*}
\label{eq:proof:lem:25:04:5}
\begin{split} 
&{\mathbb E} 
\Bigl[
 \|  Y^h_{t} \|_2^4 + 
 \|D Y^h_t \|_2^4 \Bigr]  
 \leq 
 c 
 {\mathbb E} 
 \biggl[ \|  X^h_{n}  \|^4_2 
+ \| D  X^h_n  \|_2^4 
+
\Bigl\| 
\int_{nh}^{t} 
 e^{(t-s)\Delta}dW_{s}
 \Bigr\|_2^4 
 +
\Bigl\| 
\int_{nh}^{t} 
 e^{(t-s)\Delta}dw_{s}
 \Bigr\|_2^4  
 \biggr],
 \end{split} 
\end{equation*}
where 
$(w_t=DW_t)_{t \geq 0}$. 
Using 
\eqref{eq:Lp:tilde:Xh}
and 
Lemma \ref{lem unif est derivative p1},
and
following  
 \eqref{eq:R11}, 
 we get
 \begin{equation} 
\label{eq:L4:bounds} 
 \sup_{h \in (0,1)} \sup_{t \in [0,T]}
 {\mathbb E} 
\Bigl[
 \|  Y^h_{t} \|_2^4 + 
 \|D Y^h_t \|_2^4 \Bigr]  
 < \infty. 
 \end{equation}
 
Now, as a consequence of  Lemma \ref{lem unif est derivative p1}, we obtain 
(recalling again the notation 
\eqref{eq:Yth})
\begin{lemma}
\label{lem:25:04:6}
We assume that $X_0$ takes values in $H^1({\mathbb S})$ and satisfies 
${\mathbb E}[ \| D X_0 \|_2^{2p}] < \infty$ for any $p \geq 1$.
Denoting by
$(\widehat{Y}_t^{h,k})_{k \in {\mathbb N}_0}$
the Fourier modes of $Y^h_t$,  i.e. $Y^h_t 
:= \sum_{k \in {\mathbb N}_0}
	\widehat{Y}_t^{h,k} e_k
$, 
let
$Y_t^{h,N} := \sum_{0 \le k \le N}
	\widehat{Y}_t^{h,k} e_k$ for any $N \in {\mathbb N}_0$.
Then, for any $T,\epsilon >0$, there
exists  $N_\epsilon$
such that 
\begin{equation}
\label{eq:Y:YN:distance}
	\limsup_{h \rightarrow 0}
	\biggl[ {\mathbb E} \int_0^T
	\| 
	D Y^h_s - DY^{h,N_\epsilon}_s\|^2_2
	ds \biggr]
	+
	\sup_{h \in (0,1)}
	\sup_{t \in [0,T]}
	{\mathbb E} 
	\Bigl[ 	\sup_{x \in {\mathbb S}} 
\vert Y^h_t(x) - Y^{h,N_\epsilon}_t(x) |^2
	\Bigr]
	\leq \varepsilon.
\end{equation} 
\end{lemma}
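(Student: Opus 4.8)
\medskip

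\noindent\emph{Plan of proof.}\ Write $n(s):=\lfloor s/h\rfloor$ and $N_T:=\lfloor T/h\rfloor$, and split, for $s\in[nh,(n+1)h)$ with $n=n(s)$,
\[
Y^h_s=e^{(s-nh)\Delta}X^h_{n}+Z^h_s,\qquad Z^h_s:=\int_{nh}^{s}e^{(s-r)\Delta}\,dW_r,
\]
so that both tails in \eqref{eq:Y:YN:distance} split along this decomposition. The stochastic convolution is harmless: $\E[(\widehat Z^{h,k}_s)^2]=\lambda_k^2(1-e^{-8\pi^2k^2(s-nh)})/(8\pi^2k^2)\le\lambda_k^2/(8\pi^2k^2)$ uniformly in $h,s$, and since $\lambda_k\sim k^{-\lambda}$ with $\lambda>1/2$ a direct summation (Cauchy--Schwarz and $|e_k|\le\sqrt2$ for the sup-norm) gives $\E\|DZ^h_s-DZ^{h,N}_s\|_2^2\le c_\lambda N^{1-2\lambda}$ and $\E[\sup_x|Z^h_s(x)-Z^{h,N}_s(x)|^2]\le c_\lambda N^{-1}$, both uniform in $h\in(0,1)$ and $s\in[0,T]$. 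Likewise the sup-norm tail of the heat-flow part is immediate from Cauchy--Schwarz: $\sup_x|\sum_{k>N}e^{-4\pi^2k^2(s-nh)}\widehat X^{h,n}_k e_k(x)|\le\sqrt2(\sum_{k>N}k^{-2})^{1/2}\|DX^h_{n}\|_2$, whose square has expectation $\le2C_T\sum_{k>N}k^{-2}$ by Lemma~\ref{lem unif est derivative p1}, uniformly in $h,t$. Hence the second term in \eqref{eq:Y:YN:distance} is $\le\varepsilon/2$ for $N$ large.

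\medskip

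\noindent The remaining — and genuinely delicate — quantity is the $H^1$-in-time tail of the heat-flow part, $\E\int_0^T\|De^{(s-n(s)h)\Delta}X^h_{n(s)}-(De^{(s-n(s)h)\Delta}X^h_{n(s)})^{(N)}\|_2^2\,ds$ (writing $(\cdot)^{(N)}$ for the projection onto modes $\le N$): a mode-by-mode estimate only yields uniform boundedness, not smallness, since there are $\sim T/h$ time steps each contributing $O(h)$ times the full $H^1$-energy of $X^h_{n}$. I would instead pass to the limit $h\to0$, based on an energy identity for the scheme. Since rearrangement is an $L^2$-isometry, $\|X^h_{n+1}\|_2^2=\|e^{h\Delta}X^h_n+\zeta_n\|_2^2$ with $\zeta_n:=\int_{nh}^{(n+1)h}e^{((n+1)h-r)\Delta}dW_r$; combining this with $\|e^{h\Delta}g\|_2^2=\|g\|_2^2-2\int_0^h\|De^{\tau\Delta}g\|_2^2\,d\tau$, telescoping over $n=0,\dots,N_T-1$ and taking expectations (the cross terms $\E\langle e^{h\Delta}X^h_n,\zeta_n\rangle$ vanish by independence) gives
\[
\E\int_0^{N_Th}\bigl\|De^{(s-n(s)h)\Delta}X^h_{n(s)}\bigr\|_2^2\,ds=\tfrac12\Bigl(\E\|X_0\|_2^2-\E\|X^h_{N_T}\|_2^2+\E\!\sum_{n=0}^{N_T-1}\|\zeta_n\|_2^2\Bigr).
\]
Lemma~\ref{lem keyineq} is precisely what converts the rearrangement step into such a dissipation bound, and together with Lemma~\ref{lem unif est derivative p1} it ensures all terms above are finite.

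\medskip

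\noindent The martingale cross term between $e^{(s-n(s)h)\Delta}X^h_{n(s)}$ and $Z^h_s$ has zero expectation, and $\E\int_0^T\|DZ^h_s\|_2^2\,ds\to0$ as $h\to0$ (again a Gaussian estimate, using $\lambda>1/2$); hence $\E\int_0^T\|DY^h_s\|_2^2\,ds=\tfrac12(\E\|X_0\|_2^2-\E\|X^h_{N_T}\|_2^2+\E\sum_n\|\zeta_n\|_2^2)+o(1)$. Now $\E\sum_n\|\zeta_n\|_2^2\to T\sum_k\lambda_k^2$ by direct computation, while $\E\|X^h_{N_T}\|_2^2\to\E\|X_T\|_2^2$: indeed $X^h_{N_T}=\tilde X^h_{N_Th}$ converges to $X_T$ in $H^{-1}_{\rm sym}(\bS)$ by \eqref{eq:convergence:P:probability} and the convergence of $\tilde X^h$ to $X$, hence in $L^2(\bS)$ via $\|\cdot\|_2^2\le\|\cdot\|_{2,-1}\|\cdot\|_{2,1}$ together with Lemma~\ref{lem unif est derivative p1} and the uniform integrability coming from \eqref{eq:Lp:tilde:Xh}. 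Therefore $\E\int_0^T\|DY^h_s\|_2^2\,ds\to\tfrac12(\E\|X_0\|_2^2-\E\|X_T\|_2^2+T\sum_k\lambda_k^2)$, and the matching energy identity for the RSHE itself — obtained by applying the scalar It\^o formula to each $(\widehat X^k_t)^2$ coming from \textbf{RSHE.3}, summing against $e^{-4\pi^2k^2\varepsilon}$, using that $\langle e^{\varepsilon\Delta}X_r,d\eta_r\rangle\ge0$ (as $U^2(\bS)$ is preserved by the heat semigroup, cf.\ \textbf{RSHE.2}) and letting $\varepsilon\to0$ via \textbf{RSHE.4} — identifies this limit with $\E\int_0^T\|DX_s\|_2^2\,ds$, which is in particular finite.

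\medskip

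\noindent To conclude, fix $N$. The truncation $\|(DY^h_s)^{(N)}\|_2^2=\sum_{k\le N}k^2(\widehat Y^{h,k}_s)^2$ is a continuous function of the finitely many modes $\widehat Y^{h,k}_s$, which converge to $\widehat X^k_s$, and it is uniformly integrable over $[0,T]\times\Omega$ by \eqref{eq:L4:bounds}; hence $\E\int_0^T\|(DY^h_s)^{(N)}\|_2^2\,ds\to\E\int_0^T\|(DX_s)^{(N)}\|_2^2\,ds$. Subtracting this from the convergence of the full energies just obtained,
\[
\limsup_{h\to0}\ \E\int_0^T\bigl\|DY^h_s-DY^{h,N}_s\bigr\|_2^2\,ds=\E\int_0^T\bigl\|DX_s-(DX_s)^{(N)}\bigr\|_2^2\,ds,
\]
and the right-hand side tends to $0$ as $N\to\infty$ by dominated convergence, since $\E\int_0^T\|DX_s\|_2^2\,ds<\infty$. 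Picking $N_\varepsilon$ so that this quantity and the sup-norm tail are each $\le\varepsilon/2$ yields \eqref{eq:Y:YN:distance}. The main obstacle is exactly the convergence $\E\int_0^T\|DY^h_s\|_2^2\,ds\to\E\int_0^T\|DX_s\|_2^2\,ds$: the inequality $\ge$ is soft (weak lower semicontinuity), but the matching $\le$ rests on both the scheme's energy identity (via Lemma~\ref{lem keyineq} and the $L^2$-isometry of rearrangement) and the energy identity of the limiting equation, i.e.\ the orthogonality of the reflection term, which is itself a scalar instance of Theorem~\ref{prop:ito:b}.
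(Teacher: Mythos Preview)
Your proof is correct and follows essentially the same strategy as the paper: for the $H^1$-in-time tail you reduce to showing $\mathbb{E}\int_0^T\|DY^h_s\|_2^2\,ds \to \mathbb{E}\int_0^T\|DX_s\|_2^2\,ds$ via the scheme's $L^2$-energy identity and the matching identity for the limit (which the paper simply cites from \cite{delarueHammersley2022rshe} as Corollary~4.12 and (4.31)), then subtract the convergence of the truncated energies obtained from \eqref{eq:L4:bounds}. The only notable difference is in the sup-norm tail, where you split $Y^h=e^{(s-nh)\Delta}X^h_n+Z^h$ and give explicit Fourier/Cauchy--Schwarz bounds (invoking Lemma~\ref{lem unif est derivative p1} for the heat-flow part), whereas the paper argues more abstractly by Sobolev embedding and uniform integrability of $\|Y^h_t\|_{2,1}^2$ coming from \eqref{eq:L4:bounds}; both are valid and elementary.
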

\begin{proof}
%
By Sobolev embedding theorem, there exists a constant $c>0$ such that, for any $h>0$, 
$t \in [0,T]$,  $N \in {\mathbb N}$ and $A>0$,  
\begin{equation*} 
\begin{split} 
&	{\mathbb E} 
	\Bigl[ 
	\sup_{x \in {\mathbb S}} 
\vert Y^h_t(x) - Y^{h,N}_t(x) |^2
\Bigr] 
 \leq c \sup_{u \in H^1_{\rm sym}({\mathbb S}) : \| u \|_{2,1} \leq A}
\| u - u ^N \|_{2,1}^2
+  
{\mathbb E} 
	\Bigl[ 
	 \| Y^h_t \|_{2,1}^2
	{\mathbf 1}_{\{  \| Y^h_t \|_{2,1} \geq A\}}
\Bigr].
\end{split}
\end{equation*} 
For a given $\varepsilon >0$, we can choose $A$ sufficiently large such that the second term in the right-hand side is less than $\varepsilon/2$, uniformly with respect 
to $h>0$ and $t \in [0,T]$. For this value of $A>0$, we may choose $N$ large enough that the first term in the left-hand side is also less than $\varepsilon/2$. 
This proves the claim for the second term in the left-hand side of 
\eqref{eq:Y:YN:distance}. 

In order to show the claim for the first term, it suffices to prove that 
\begin{equation}
\label{eq:DYsh:DXs}
\limsup_{h \rightarrow 0} {\mathbb E} \int_0^T
\bigl\| D Y_s^h \bigr\|_2^2 ds
\leq {\mathbb E} \int_0^T
\bigl\| D  X_s \bigr\|_2^2 ds,
\end{equation}
for $X$ the solution to 
\eqref{eq:RSHE} 
with $X_0$ as initial condition. 
Indeed, if true, 
\eqref{eq:DYsh:DXs}
would  imply 
\begin{equation*}
\begin{split}
\limsup_{h \rightarrow 0} {\mathbb E} \int_0^T
\bigl\| D Y_s^h - D Y_s^{h,N} \bigr\|_2^2 ds
&= 
\limsup_{h \rightarrow 0} {\mathbb E} \int_0^T
\bigl( \bigl\| D Y_s^h \bigr\|_2^2 - \bigl\| D Y_s^{h,N} \bigr\|_2^2 \bigr) ds
\\
&\leq
{\mathbb E} \int_0^T
\bigl( \bigl\| D  X_s \bigr\|_2^2
- 
\bigl\| D  X_s^N \bigr\|_2^2 \bigr) ds,
\end{split}
\end{equation*}
and  
\eqref{eq:Y:YN:distance}
would follow by observing that the right-hand tends to $0$ as $N$ tends to $\infty$. 
Notice that, above, the fact that 
$\lim_{h \rightarrow 0} {\mathbb E} \int_0^T
  \| D Y_s^{h,N}\|_2^2 ds=
  {\mathbb E} \int_0^T
  \| D X_s^N \|_2^2 ds$ for any fixed $N$ is a consequence of 
  \eqref{eq:convergence:P:probability} (which says in particular that each Fourier mode of 
$Y_s^h$ converges in law to the corresponding Fourier mode of $X_s$) 
and of the uniform integrability property
\eqref{eq:L4:bounds}.

The proof of 
\eqref{eq:DYsh:DXs}
follows from the proof of 
Corollary 4.12  in \cite{delarueHammersley2022rshe}, see in particular the derivation of (4.27) therein. Namely, there exists a constant $c_{1,\lambda}$ such that 
(for $nh \le T$)
\begin{equation*} 
{\mathbb E} 
\bigl[ \| X_{n+1}^h \|_2^2 \bigr] 
+ 2 {\mathbb E} \biggl[ \int_{0}^{h} 
\| D e^{s \Delta} X_n^h \|_2^2 
ds\biggr] \leq  
{\mathbb E} 
\bigl[  \| X_{n}^h \|_2^2 \bigr] 
+ c_{1,\lambda} h + h \delta(h), 
\end{equation*}
with $\lim_{h \rightarrow 0} \delta(h)=0$. 
Next, we insert the definition of 
$Y_t^h$ and notice from a standard independence argument that 
the bound remains true if one replaces 
$\| D e^{s \Delta} X_n^h \|_2$ by 
$ \| D Y_s^h \|_2$ in the left-hand side. 
Fixing $T>0$ as in the statement, letting $N_T^h:=\lfloor T/h \rfloor+h \geq T$
and 
summing over $n$ between $0$ and $N_T-1$, we deduce that 
\begin{equation*}
 2 {\mathbb E} \biggl[ \int_{0}^{T} 
\| D Y_s^h \|_2^2 
ds\biggr] \leq  
{\mathbb E} 
\bigl[  \| X_0 \|_2^2 \bigr] 
-
{\mathbb E} 
\bigl[ \| X_{N_T^h+h}^h \|_2^2 \bigr] 
+ c_{1,\lambda} (T+\delta(h)). 
\end{equation*} 
By
\eqref{eq:Y:YN:distance}
(and the weak convergence of the Fourier modes), 
$\lim_{h \rightarrow 0} {\mathbb E} [ \| X_{N_T^h+h}^h \|_2^2 ] 
=
{\mathbb E} [ \| X_T\|^2_2]$.
Inequality 
\eqref{eq:DYsh:DXs}
follows, recalling 
from 
\cite[Corollary 4.12 \& (4.31)]{delarueHammersley2022rshe}
that \\
${\mathbb E} 
[ \| X_0 \|_2^2 ] 
-
{\mathbb E} 
[ \| X_T \|_2^2 ] 
+ c_{1,\lambda} T  
= 2\int_0^T 
{\mathbb E}  [ 
\| D X_s \|^2_2 ] ds.$
\end{proof}

We end up with 
\begin{lemma}
\label{lem:25:04:7}
We assume that $X_0$ takes values in $H^1({\mathbb S})$ and satisfies 
${\mathbb E} [ \| D X_0 \|_2^{2p}] < \infty$ for any $p \geq 1$. 
Then, for any $T>0$ and any (bounded and) continuous function $\psi$ from ${\mathcal P}_2({\mathbb R}) \times {\mathbb R}$ to ${\mathbb R}$, 
the random variables
\begin{equation*}
\biggl( \int_0^T 
\int_{\mathbb S} 
\psi \bigl( \textrm{\rm Leb}_{\mathbb S} \circ (Y_s^h)^{-1} ,Y_s^h(x)\bigr) [D  Y^h_s(x)]^2 dx \, ds
\biggr)_{0 < h < 1}
\end{equation*}
  converge in law, as $h$ tends to $0$, 
  to 
$\displaystyle \int_{\mathbb S} 
\psi \bigl( \textrm{\rm Leb}_{\mathbb S} \circ X_s^{-1},X_s(x)\bigr) [ D X_s(x)]^2 dx \, ds$.
\end{lemma}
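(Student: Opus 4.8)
The plan is to combine the Skorokhod representation theorem with a compensated--compactness argument that upgrades the weak convergence of the gradients into a strong one. Write $\mu_s^h:=\textrm{Leb}_{\mathbb S}\circ (Y_s^h)^{-1}$, $\mu_s:=\textrm{Leb}_{\mathbb S}\circ X_s^{-1}$, and
\[
\Phi^h:=\int_0^T\!\!\int_{\mathbb S}\psi\bigl(\mu_s^h,Y_s^h(x)\bigr)\,[DY_s^h(x)]^2\,dx\,ds,\qquad
\Phi:=\int_0^T\!\!\int_{\mathbb S}\psi\bigl(\mu_s,X_s(x)\bigr)\,[DX_s(x)]^2\,dx\,ds .
\]
By a routine subsequence argument it suffices to prove that every sequence $h_n\downarrow 0$ admits a subsequence along which $\Phi^{h}$ converges in law to $\Phi$; fix such a sequence. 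I would first record two facts. First, $(Y^h)_{0<h<1}$ converges in law to $X$ in $\mathcal D([0,T];H^{-1}_{\rm sym}({\mathbb S}))$: indeed $\tilde X^h$ converges in law to $X$ there (as recalled in Section~\ref{se:3}) and, by \eqref{eq:convergence:P:probability}, $Y^h$ and $\tilde X^h$ are uniformly close in $H^{-1}_{\rm sym}({\mathbb S})$ in probability; since $X$ has continuous paths, the associated Skorokhod copies of $Y^h$ will in fact converge uniformly. Second, the Dirichlet energies converge:
\[
\lim_{h\to 0}{\mathbb E}\!\int_0^T\|DY_s^h\|_2^2\,ds={\mathbb E}\!\int_0^T\|DX_s\|_2^2\,ds<\infty ,
\]
where ``$\le$'' is \eqref{eq:DYsh:DXs}, ``$\ge$'' follows from $\|DY_s^h\|_2\ge\|DY_s^{h,N}\|_2$ together with the convergence of $\int_0^T\|DY_s^{h,N}\|_2^2\,ds$ for fixed $N$ obtained in the proof of Lemma~\ref{lem:25:04:6} (letting then $N\to\infty$), and the finiteness is recalled in that same proof.

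The laws of the pairs $(Y^h,\int_0^T\|DY_s^h\|_2^2\,ds)$ on $\mathcal D([0,T];H^{-1}_{\rm sym}({\mathbb S}))\times[0,\infty)$ are tight (by the two facts above and by \eqref{eq:L4:bounds}), so along a further subsequence they converge, and by the Skorokhod representation theorem I may realise, on a common probability space, copies $(\bar Y^h,\bar A^h)$ of $(Y^h,\int_0^T\|DY_s^h\|_2^2\,ds)$ converging almost surely to a pair $(\bar X,\bar A)$ with $\bar X\stackrel{d}{=}X$ and $\sup_{s\le T}\|\bar Y^h_s-\bar X_s\|_{2,-1}\to0$ a.s. The heart of the proof is to identify $\bar A$ and to improve the convergence of $D\bar Y^h$. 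Since $\bar Y^h\to\bar X$ uniformly in $H^{-1}_{\rm sym}({\mathbb S})$ and $(D\bar Y^h)_h$ is a.s.\ bounded in $L^2([0,T]\times{\mathbb S})$ (because $\bar A^h\to\bar A<\infty$), one has $D\bar Y^h\rightharpoonup D\bar X$ weakly in $L^2([0,T]\times{\mathbb S})$, hence $\int_0^T\|D\bar X_s\|_2^2\,ds\le\bar A$ a.s.\ by weak lower semicontinuity of the norm; as moreover ${\mathbb E}\bar A=\lim_h{\mathbb E}\bar A^h={\mathbb E}\int_0^T\|DX_s\|_2^2\,ds$ (the $\bar A^h$ being uniformly integrable by \eqref{eq:L4:bounds}) and $\bar X\stackrel{d}{=}X$, this forces $\bar A=\int_0^T\|D\bar X_s\|_2^2\,ds$ a.s. Therefore $\|D\bar Y^h\|_{L^2([0,T]\times{\mathbb S})}\to\|D\bar X\|_{L^2([0,T]\times{\mathbb S})}$ a.s., and combined with the weak convergence this yields the strong convergence $D\bar Y^h\to D\bar X$ in $L^2([0,T]\times{\mathbb S})$ a.s. By the Poincaré inequality on ${\mathbb S}$ and the bound $|\widehat{(\bar Y^h_s-\bar X_s)}_0|\le\|\bar Y^h_s-\bar X_s\|_{2,-1}$ on the zeroth Fourier coefficient, it follows that $\bar Y^h\to\bar X$ in $L^2([0,T]\times{\mathbb S})$ a.s.\ too; consequently, passing to a further subsequence, $\bar Y^h_s(x)\to\bar X_s(x)$ for a.e.\ $(s,x)$, and, since $\mathcal W_2(\bar\mu^h_s,\bar\mu_s)\le\|\bar Y^h_s-\bar X_s\|_2$ with $\bar\mu^h_s:=\textrm{Leb}_{\mathbb S}\circ(\bar Y^h_s)^{-1}$ and $\bar\mu_s:=\textrm{Leb}_{\mathbb S}\circ\bar X_s^{-1}$, also $\mathcal W_2(\bar\mu^h_s,\bar\mu_s)\to0$ for a.e.\ $s$.

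Finally I would pass to the limit in $\bar\Phi^h:=\int_0^T\int_{\mathbb S}\psi(\bar\mu^h_s,\bar Y^h_s(x))\,[D\bar Y^h_s(x)]^2\,dx\,ds$, which has the law of $\Phi^h$. Writing
\[
\bar\Phi^h=\int_0^T\!\!\int_{\mathbb S}\psi(\bar\mu^h_s,\bar Y^h_s(x))\bigl([D\bar Y^h_s(x)]^2-[D\bar X_s(x)]^2\bigr)dx\,ds+\int_0^T\!\!\int_{\mathbb S}\psi(\bar\mu^h_s,\bar Y^h_s(x))\,[D\bar X_s(x)]^2\,dx\,ds ,
\]
the first term is bounded by $\|\psi\|_\infty\|D\bar Y^h-D\bar X\|_{L^2([0,T]\times{\mathbb S})}\|D\bar Y^h+D\bar X\|_{L^2([0,T]\times{\mathbb S})}\to0$ a.s., while in the second the integrand converges a.e.\ to $\psi(\bar\mu_s,\bar X_s(x))[D\bar X_s(x)]^2$ (by continuity of $\psi$ and the convergences just obtained) and is dominated by $\|\psi\|_\infty[D\bar X_s(x)]^2\in L^1([0,T]\times{\mathbb S})$, so dominated convergence applies. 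Hence $\bar\Phi^h\to\int_0^T\int_{\mathbb S}\psi(\bar\mu_s,\bar X_s(x))[D\bar X_s(x)]^2\,dx\,ds$ a.s.; the right-hand side has the law of $\Phi$, so $\Phi^h$ converges in law to $\Phi$ along the chosen subsequence, completing the proof.

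The main obstacle is the middle step: a priori one only controls $Y^h\to X$ in the negative-order space $H^{-1}_{\rm sym}({\mathbb S})$, so there is no direct handle on $DY^h$, and the quadratic, discontinuous dependence of the functional on $DY^h$ is out of reach of the continuous mapping theorem. The remedy is to pair the weak $L^2$-convergence of the gradients --- which is automatic, given the $H^{-1}$-convergence and the uniform $L^2$-bound on $DY^h$ provided by Lemma~\ref{lem unif est derivative p1} through \eqref{eq:L4:bounds} --- with the convergence of the Dirichlet energies, so as to promote weak convergence to strong convergence of $DY^h$ in $L^2([0,T]\times{\mathbb S})$; this extra quantitative input is exactly what is missing from the $H^{-1}$ picture and what makes the quadratic term pass to the limit.
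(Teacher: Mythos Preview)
Your argument is correct and follows a genuinely different route from the paper's. The paper proceeds by Fourier truncation: for each fixed $N$ it passes to the limit in
\[
\int_0^T\!\!\int_{\mathbb S}\psi\bigl(\textrm{Leb}_{\mathbb S}\circ (Y_s^{h,N})^{-1},Y_s^{h,N}(x)\bigr)\,[DY_s^{h,N}(x)]^2\,dx\,ds
\]
via the continuous mapping theorem (this is a continuous functional of finitely many Fourier coefficients), and then invokes Lemma~\ref{lem:25:04:6} together with \eqref{eq:L4:bounds} to show that the truncation error can be made uniformly small in $h$. Your approach instead works pathwise after a Skorokhod embedding and exploits the Radon--Riesz principle: weak $L^2$-convergence of $D\bar Y^h$ combined with convergence of the Dirichlet energies $\int_0^T\|D\bar Y^h_s\|_2^2\,ds$ forces strong $L^2([0,T]\times{\mathbb S})$-convergence of the gradients, after which the quadratic functional passes to the limit directly. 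Both proofs rest on the same analytical core---the energy identity \eqref{eq:DYsh:DXs} upgraded to an equality---but package it differently: the paper uses it to prove the uniform smallness of high modes (Lemma~\ref{lem:25:04:6}), whereas you use it to match norms and deduce strong compactness. Your route is more conceptual and avoids the explicit Fourier bookkeeping; the paper's route is more elementary, stays closer to the scheme, and avoids the Skorokhod machinery and the (minor but real) measurability check that $\Phi$ and $\Phi^h$ are Borel functionals of the $H^{-1}$-path.
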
 

\begin{proof}
By weak convergence of 
$((\tilde X^h_s)_{0 \leq s \leq T})_{h \in (0,1)}$
in
${\mathcal C}([0,T];H^{-1}({\mathbb S}))$
and by 
\eqref{eq:convergence:P:probability}
(which implies the weak convergence of
$((Y_s)_{0 \leq s \leq T})_{h \in (0,1)}$, we can easily pass to the (weak limit) on  
\begin{equation*}
\int_0^T 
\int_{\mathbb S} 
\psi \bigl( \textrm{\rm Leb}_{\mathbb S} \circ (Y_s^{h,N})^{-1},Y_s^{h,N}(x)\bigr) [ D Y_s^{h,N}(x)]^2 dx \, ds
\end{equation*}
as $h$ is sent to $0$, 
for any fixed $N$. Indeed, the above can be written as a continuous function
of   
the Fourier coefficients $((\widehat{Y}_s^{h,k})_{0 \leq k \le N})_{0 \le s \le T}$. 
\color{black}
It remains to see that there exists a constant $C_\psi$, depending on $\psi$, such that 
\begin{align}
\nonumber
&{\mathbb E} 
\biggl\vert 
\int_0^T 
\int_{\mathbb S} 
\psi \bigl( \textrm{\rm Leb}_{\mathbb S} \circ (Y_s^h)^{-1},Y_s^h(x)\bigr) [ D Y_s^{h}(x)]^2 dx \, ds
\\
&\hspace{15pt} -\int_0^T 
\int_{\mathbb S} 
\psi \bigl(  \textrm{\rm Leb}_{\mathbb S} \circ (Y_s^{h,N})^{-1},Y_s^{h,N}(x)\bigr) [ DY_s^{h,N}(x)]^2 dx \, ds
\biggr\vert 
\label{eq:Y:-YN}
\\
&\leq
C_\psi 
{\mathbb E} 
\biggl[
\int_0^T 
\int_{\mathbb S} 
\Bigl\vert [ D Y_s^{h}(x)]^2 - [ D Y_s^{h,N}(x)]^2 \Bigr\vert \, dx \, ds
\biggr] \nonumber
\\
&\hspace{15pt} 
+
{\mathbb E} 
\biggl[
\int_0^T 
\sup_{x \in {\mathbb S}} 
\vert 
\psi \bigl( \textrm{\rm Leb}_{\mathbb S} \circ (Y_s^h)^{-1},Y_s^h(x)\bigr)
-
\psi \bigl(  \textrm{\rm Leb}_{\mathbb S} \circ (Y_s^{h,N})^{-1},Y_s^{h,N}(x)\bigr) 
\vert
\| D Y_s^h \|_2^2. \nonumber 
\, ds \biggr].
\end{align}
By combining  Lemma
\ref{lem:25:04:6}
with the bound 
\eqref{eq:L4:bounds}, we can 
render the supremum limit (as $h$ tends to $0$) of the right-hand side 
as small as needed by choosing $N$ large enough. 
\end{proof} 
\color{black}

\section{Expansion} 
\label{se:4}

We now complete the proof of Theorem 
\ref{prop:ito:b}. We first prove the result when $X_0 \in H_{\rm sym}^1({\mathbb S}) \cap U^2({\mathbb S})$
with ${\mathbb E} [ \| D X_0 \|_2^{2p}] < \infty$ for any $p \geq 1$. 
{ \ }
\vspace{4pt} 

\textit{First Step.} The very first step in the proof is to expand the difference
$    \varphi ( X^h_{n+1} ) 
    - \varphi ( X^h_n )$. 
Since $\varphi$  is invariant by rearrangement
and $Y_{(n+1)h-}^* = X_{n+1}^h$,
we can rewrite the latter difference as 
$   \varphi ( Y^h_{(n+1)h-} ) 
    - \varphi ( X^h_n )$.
Since $\phi$ is continuous with respect to  the 
$L^2$-norm on ${\mathbb S}$, 
\begin{equation}
\label{prop:25:4:1}
\begin{split}
      \varphi \bigl( X^h_{n+1} \bigr) 
    - \varphi \bigl( X^h_n \bigr)&=  \varphi \bigl( Y^h_{(n+1)h-} \bigr) 
    - \varphi \bigl( X^h_n \bigr)
 = \lim_{N \rightarrow \infty}
    \Bigl[
    \varphi \bigl( Y^{h,N}_{(n+1)h-} \bigr) 
    - \varphi \bigl( Y^{h,N}_{nh} \bigr) \Bigr],
\end{split}
\end{equation}
where we used 
the same notations as in Lemma 
\ref{lem:25:04:6}
in the above right-hand side. We now claim that we can apply It\^o's formula to the increment appearing in the argument of the latter limit. To do so, we must argue that 
$\varphi$ is a smooth function, when restricted to random variables with a finite Fourier expansion. In fact, we first claim that 
$\varphi$ has directional derivatives of order 1 and 2 with respect to $(e_k)_{k \in {\mathbb N}_0}$. We clearly have, for 
any $k \in {\mathbb N}_0$
and
any random variable 
$X \in L^2({\mathbb S},{\mathbb R},\textrm{\rm Leb})$,
\begin{equation*}
\frac{d}{d \epsilon} 
\varphi \bigl( X + \epsilon e_k \bigr)_{\vert 
\epsilon =0} 
= 
    \int_{\mathbb S} \partial_\mu \varphi\bigl({\mathcal L}(X) \bigr)\bigl( X(x) 
    \bigr) e_k(x) dx,
\end{equation*}
and then, 
for any additional $j \in {\mathbb N}_0$,
\begin{equation*}
\begin{split}
\frac{d^2}{d \epsilon_1 d \epsilon_2 } 
\varphi \bigl( X + \epsilon_1 e_k + 
\epsilon_2 e_j \bigr)_{\vert (\epsilon_1,\epsilon_2)=(0,0)} 
&= 
    \int_{\mathbb S} \nabla_y \partial_\mu \varphi\bigl({\mathcal L}(X) \bigr)\bigl( X(x) 
    \bigr) e_k(x)  {e_j(x)}dx
    \\
&\hspace{5pt} +  \int_{\mathbb S}\int_{\mathbb S} \partial^2_\mu  \varphi\bigl({\mathcal L}(X) \bigr)\bigl( X(x),X(y) 
    \bigr) e_k(x) e_j(y) dx dy. 
    \end{split}
\end{equation*}
We deduce that $\varphi$, when restricted to random variables with a finite number of non-zero Fourier coefficients, is twice continuously differentiable and, with intuitive notations, 
\begin{equation*} 
 \begin{split}
\partial_{e_k} \varphi (X)  &=    \int_{\mathbb S}   \partial_\mu \varphi\bigl({\mathcal L}(X) \bigr)\bigl( X(x) 
    \bigr) e_k(x) dx,
    \\
\partial^2_{e_k e_j} \varphi (X)
&=   
    \int_{\mathbb S} \nabla_y \partial_\mu \varphi\bigl({\mathcal L}(X) \bigr)\bigl( X(x) 
    \bigr) e_k(x)  {e_j(x) }dx
    \\
&\hspace{15pt} 
+  \int_{\mathbb S}\int_{\mathbb S} \partial^2_{\mu}\varphi\bigl({\mathcal L}(X) \bigr)\bigl( X(x),X(y) 
    \bigr) e_k(x) e_j(y) dx dy. 
 \end{split}
 \end{equation*}
This says that we can apply It\^o's formula to 
$(\varphi(Y^{h,N}_s))_{nh \leq s \leq (n+1)h}$. 
{By 
\eqref{eq:Yth}, we get, for $s \in [nh,(n+1)h)$,
and with $((\beta_t^k)_{t \geq 0})_{k \in {\mathbb N}_0}$ as in 
\eqref{eq:expansion:W:beta:noises},} 
\begin{equation}
\label{eq:YN:expansion}
\begin{split}
d_s \bigl[ \varphi\bigl(Y^{h,N}_s\bigr)
\bigr]
&= - 4 \pi^2 \sum_{0 \le k \leq N}
k^2  \partial_{e_k} \varphi \bigl( Y_s^{h,N}\bigr)  
\widehat{Y}_s^{k,h} 
ds
+ \sum_{0 \le k \le N}
\lambda_k \partial_{e_k} \varphi \bigl( Y_s^{h,N}\bigr)  d \beta_s^k 
\\
&\hspace{15pt} 
+ \frac12 \sum_{0 \le k \le N}
\lambda_k^2 \partial^2_{e_k e_k} \varphi  (\hat{X}^k)^2\bigl( Y_s^{h,N}\bigr)  ds
\\
&=: d T_1^{h,N}(s) + d T_2^{h,N}(s) + d T_3^{h,N}(s).
\end{split}
\end{equation}
Now,
\begin{equation}
\label{eq:T1N}
\begin{split}
 \frac{d}{ds} T_1^{h,N}(s)  &=
\int_{\mathbb S}   \partial_{\mu}
\varphi\bigl( {\mathcal L}(Y^{h,N}_s) \bigr)
\bigl(Y^N_s(x)\bigr) \Delta Y^{h,N}_s(x) dx
\\
&=-
\int_{\mathbb S} \nabla_y \partial_{\mu}
\varphi\bigl( {\mathcal L}(Y^{h,N}_s) \bigr)
\bigl(Y^{h,N}_s(x)\bigr) \bigl[ D Y^{h,N}_s(x) \bigr]^2 dx.   
\end{split}
\end{equation}
By the same argument, we have
\begin{equation}
\label{eq:T2N}
\begin{split}
d T_2^{h,N}(s) =&
\int_{\mathbb S}  \partial_{\mu}
\varphi\bigl( {\mathcal L}(Y^{h,N}_s) \bigr)
\bigl(Y^{h,N}_s(x)\bigr)  d W^{N}_s(x),   
\end{split}
\end{equation}
with the notation
$W^{N}_t(x) := \sum_{0 \leq k  \leq N} 
\lambda_k
\beta^{k}_t e_k(x)$, $x \in {\mathbb S}$, $t \geq 0$.  
In order to express $d T_3^{h,N}(s)$, we use the same notations as in
\eqref{eq:F1:F2}, but with an additional truncation at level $N$, 
i.e., 
\begin{equation}
\label{eq:FN} 
\begin{split}
&F_1^N(x) := \sum_{0 \leq k  \leq N}  \lambda_k^2 e_k^{{2}}(x), \quad  F_2^N(x,y) := \sum_{0 \leq  k  \leq N}  \lambda_k^2 e_k(x)e_k(y), \quad 
x,y \in {\mathbb S}. 
\end{split}
\end{equation}
We get
\begin{equation}
\label{eq:T3N}
\begin{split}
\frac{d}{ds} T_3^{h,N}(s) &:= \frac12 
    \int_{\mathbb S} \nabla_y \partial_\mu \varphi\bigl({\mathcal L}(Y^{h,N}_s) \bigr)\bigl( Y^{h,N}_s(x) 
    \bigr) F_1^N(x) dx
    \\
&\hspace{15pt} 
+  \frac12   \int_{\mathbb S}\int_{\mathbb S} \partial^2_{\mu} \varphi\bigl({\mathcal L}\bigl(Y^{h,N}_s\bigr) \bigr)\bigl( Y^{h,N}_s(x),Y^{h,N}_s(y) 
    \bigr) F_2^N(x,y) dx dy. 
\end{split}
\end{equation} 

\textit{Second Step.}
We now aim to let $N$ tend to $\infty$ on each $[nh,(n+1)h)$.
The $ds$ terms can be treated as
in  
\eqref{eq:Y:-YN}. The stochastic integral
can be handled by means of It\^o's isometry, using the fact that 
$\sum_{m \in {\mathbb N}_0} 
\lambda_m^2 < \infty$. 
Briefly, we obtain the 
same expansion {as} in 
\eqref{eq:YN:expansion}
but removing (in a very obvious way) the truncation parameter 
$N$ in the various terms and in the subsequent 
displays 
\eqref{eq:T1N}, 
\eqref{eq:T2N},
\eqref{eq:FN}
and
\eqref{eq:T3N}.
Then, using {\eqref{prop:25:4:1}},
we can recombine all 
the steps together. We get, ${\mathbb P}$ almost surely, for all $t \geq 0$, 
(with $F_1$ and $F_2$   as in 
	\eqref{eq:F1:F2}):
\begin{equation*}
\begin{split}
&\varphi\bigl(X_t^h\bigr)
-
\varphi\bigl(X^0\bigr)
= - \int_0^t 
\int_{\mathbb S} \nabla_y \partial_{\mu}
\varphi\bigl( {\mathcal L}(Y^{h}_s) \bigr)
\bigl(Y^{h}_s(x)\bigr) \bigl[ \nabla_x Y^{h}_s(x) \bigr]^2 dx \, ds
\\
&\hspace{15pt} +
\int_0^t 
\int_{\mathbb S}  \partial_{\mu}
\varphi\bigl( {\mathcal L}(Y^{h}_s) \bigr)
\bigl(Y^{h}_s(x)\bigr)  d W_s(x)
  +  {\frac12}
  \int_0^t  \int_{\mathbb S} \nabla_y \partial_\mu \varphi\bigl({\mathcal L}(Y^{h}_s) \bigr)\bigl( Y^{h}_s(x) 
    \bigr) F_1(x) dx \, ds
    \\
&\hspace{15pt} 
+  {\frac12} \int_0^t \int_{\mathbb S}\int_{\mathbb S} \partial^2_{\mu} \varphi\bigl({\mathcal L}\bigl(Y^{h}_s\bigr) \bigr)\bigl( Y^{h}_s(x),Y^{h}_s(y) 
    \bigr) F_2(x,y) dx \, dy \, ds
    \\
    &=: S_1^h(t) + S_2^h(t) + S_3^h(t) + S_4^h(t).
\end{split} 
\end{equation*} 

\textit{Third Step.} We now 
want to let $h$ tend to $0$ in the right-hand side.
The strategy is the same as in the proof of 
Lemma  \ref{lem:25:04:7}. The point is to come back to $(Y^{h,N}_s)_{0 \le s \leq T}$. Notice however that this is different from what is done in the previous step because 
the expansion is now given on the entire interval $[0,t]$. This makes it possible to pass to the limit (in the weak sense) in 
$S_1^h$, $S_3^h$ and $S_4^h$. %
The term $S_2^h$ is somewhat more complicated, but we can easily adapt the works \cite{KurtzProtter1,KurtzProtter2} to pass to the limit therein {(recalling that 
we have the joint convergence of 
$(\tilde X^h_t,W_t)_{t \geq 0}$
and thus of 
$(Y_t^h,W_t)_{t \geq 0}$)}. 
We get the announced expansion, for a fixed $t$, almost surely. 
By continuity, we can easily exchange the quantifiers and get the expansion almost surely, for any $t \geq 0$. 
\vspace{4pt}

It now remains to relax the condition $X_0 \in H^1({\mathbb S})$ {(and just retain the assumption 
${\mathbb E}[ \| X_0 \|^p_2] < \infty$ for all $p \geq 1$)}. In fact, we can apply the expansion to $e^{\varepsilon \Delta} X_0$ for a given $\varepsilon >0$. Indeed, it is obvious that 
$e^{\varepsilon \Delta} X_0  \in H^1({\mathbb S})$ {(and 
${\mathbb E}[ \| D(e^{\varepsilon \Delta} X_0) \|^p_2] < \infty$ for all $p \geq 1$)}. Moreover, it is known (see \cite{delarueHammersley2022rshe})
that 
$e^{\varepsilon \Delta} X_0 \in U^2({\mathbb S})$
if 
$X_0 \in U^2({\mathbb S})$. The point is to let $\varepsilon$ tend to $0$
in the It\^o expansion 
for the process $(X_t^\varepsilon)_{t \geq 0}$ obtained by solving the  rearranged 
SHE with $X_0^\varepsilon$ as initial condition. We already know from 
\cite{delarueHammersley2022rshe} (see for instance the proof of 
Propositions 4.14 and 4.17 therein) that, for any $t \in [0,T]$, there exists a constant 
$C_T>0$ such that, for any $t \in [0,T]$,  
\begin{equation*} 
\sup_{t \in [0,T]} 
\| X_t^{\varepsilon} - X_t \|_2^2
+
\int_0^T 
\| D X_s^{\varepsilon} - D X_s \|_2^2 
ds
\leq C_T \| X_0^\varepsilon - X_0\|_2^2,
\end{equation*}  
which allows one to pass easily  {to the limit $\varepsilon \searrow 0$ in the It\^o expansion. 
\qed

\appendix

\printbibliography  
\end{document}